\documentclass[preprint,12pt]{elsarticle}
\usepackage{color}
\usepackage{amsthm,amsbsy,amsmath,amsfonts,amssymb,amscd}
\RequirePackage{graphicx}
\RequirePackage{caption}
\RequirePackage{subcaption}
\usepackage{hyperref}
\newtheorem{proposition}{Proposition}
\newtheorem{definition}{Definition}
\newtheorem{remark}{Remark}
\newdefinition{rmk}{Remark}
\newtheorem{example}{Example}
\newtheorem{corollary}{Corollary}
\begin{document}
\begin{frontmatter}
\title{Invariances of random fields paths, with applications in 
Gaussian Process Regression}

\author[imsv]{David Ginsbourger\corref{cor1}}
\ead{ginsbourger@stat.unibe.ch}

\author[emse]{Olivier Roustant}
\ead{roustant@emse.fr}

\author[sitran]{Nicolas Durrande}
\ead{n.durrande@sheffield.ac.uk}

\address[imsv]{University of Bern, Department of Mathematics and Statistics, 
Alpeneggstrasse, 22 CH-3012 Bern, Switzerland}
\address[emse]{Ecole Nationale Sup\'erieure des Mines de Saint-Etienne, 
FAYOL-EMSE, LSTI, F-42023 Saint-Etienne, France}
\address[sitran]{The University of Sheffield,  Department of Computer Science, 
Regent Court, 211 Portobello, Sheffield S1 4DP, UK}

\cortext[cor1]{Corresponding author}

\begin{abstract}
We study pathwise invariances of centred random fields that can be controlled through the covariance.  
A result involving composition operators is obtained in second-order settings, and we show that various path properties including additivity boil down to invariances of the covariance kernel. 
These results are extended to a broader class of operators in the Gaussian case, via the Lo\`eve isometry. 
Several covariance-driven pathwise invariances are illustrated, including fields with symmetric paths, centred paths, harmonic paths, or sparse paths.  
The proposed approach delivers a number of promising results and perspectives in Gaussian process regression. 
\end{abstract}

\begin{keyword}
Covariance kernels, 
Composition operators,
RKHS,
Bayesian function learning,
Structural priors.
\MSC 60G60, 60G17, 62J02.
\end{keyword}

\end{frontmatter}

\section{Introduction}

Whether for function approximation,  classification, or density estimation, probabilistic models relying on random fields have been increasingly used in recent works from various research communities. Finding their roots in geostatistics and spatial statistics with optimal linear prediction and Kriging \citep{mat63, ste99}, random field models for prediction have become a main stream topic in machine learning (under the \textit{Gaussian Process Regression} terminology, see, e.g., \cite{ras:wil06}), with a spectrum ranging from metamodeling and adaptive design approaches for time-consuming simulations in science and engineering \cite{wel:buc:sac:wyn:mit:mor92, oha06, jon01, san:wil:not03}) to theoretical Bayesian statistics in function spaces (See \cite{van:van2008a, van:van2008b, van:van2011} and references therein). 
Often, a Gaussian random field model is assumed for the function of interest, and so all prior assumptions on this function are incorporated through the corresponding mean function and covariance kernel. 
Here we focus on random field models for which the covariance kernel exists, and we discuss some mathematical properties of associated realisations (or \textit{paths}) depending on the kernel, both in the Gaussian case and in a more general second-order framework.

\medskip

A number of well-known random field properties driven by the covariance kernel (say in the centred case) are in the mean square sense \cite{ssd}, e.g. $L^2$ continuity and differentiability \cite{review_corr}. 
Such results are quite general in the sense that they hold in a variety of cases (Gaussian or not), but they generally aren't informative about the pathwise beahviour of underlying random fields. 
In the Gaussian case, however, much can be said about path regularity properties of stationary random field paths (Cf. classical results in \cite{cra:lea1967} and subsequent works) based on  
the behaviour of the covariance kernel in the neighbourhood of the origin. 
Likewise, for non-stationary Gaussian fields, results connecting path regularity to kernel properties can be found in \cite{adl1990}. 
More recently, \cite{sch2010} studied path regularity of second-order random fields, and could draw conclusions about a.s. continuous differentiability  in non-Gaussian settings. Also, we refer to the thesis \cite{sch2009} for an enlightening exposition of state-of-the-art results concerning regularity properties of random field sample paths in various frameworks. 

In a different settings, links between invariances of kernels under operations like translations and rotations (that is to say, the notions of \textit{stationarity} and \textit{isotropy} \cite{ssd}) and invariances in distribution of the corresponding random fields have been covered extensively in spatial statistics and throughout the literature of probability theory \cite{partha}. However, such properties are to be understood \textit{in distribution}, and do not directly concern random field paths. 
Our main focus in the present work is on \textit{pathwise} algebraic and geometric properties of random fields, such as invariances under group actions or sparse function decompositions of multivariate paths. 

\medskip

We first establish in a quite general framework, that for a centred random field $(Z_{x})_{x\in D}$ possessing a covariance (i.e. such that the variance is finite at any location in the index space $D$), $Z$ has paths invariant under $T$ with probability $1$ if and only if $\forall \mathbf{x}\in D \  T(k(\cdot, \mathbf{x}))=k(\cdot, \mathbf{x})$, where $T$ belongs to the class of linear combinations of composition operators. 
The presented results generalise \cite{afst_david}, where random fields with paths invariant under the action of a finite group were studied. 
Here we also extend recent works on additive kernels for high-dimensional kriging \cite{afst_nico,Duvenaud2011additif}, and we provide a simple characterization of the class of kernels leading to squared-integrable centred random fields with additive paths.
Furthermore, in the particular case of Gaussian random fields, a more general class of invariances can be covered through the link between operators on the paths and operators on the \textit{reproducing kernel Hilbert space} \cite{ber:tho04} 
associated with the random field. 

\medskip

Section $2$ presents a general result characterizing path invariance in terms of argumentwise invariance of covariance kernels, in the case of combinations of composition operators. 
In Section $3$, we discuss how the Gaussianity assumption enables extending the results of Section $2$ to more general operators.
The obtained results are applied to Gaussian process regression in Section $4$ where the potential of argumentwise invariant kernels is demonstrated through various examples. 
Section $5$ is dedicated to an overall conclusion of the article, and a discussion on some research perspectives. 

\section{Invariance under combinations of composition operators}
\label{sec:coc}
\subsection{Motivations}

Designing kernels imposing some structural constraints on the associated random field models is of interest in various situations. 

One of those situations is the high-dimensional function approximation framework, where simplifying assumptions are needed in order to guarantee 
a reasonable inference despite the curse of dimensionality. 
Following its successful use in multidimensional nonparametric smoothing \cite{stone1985additive, Hastie1990}, the \textit{additivity assumption} has become a very popular simplifying assumption for dealing with high-dimensional problems, and has recently inspiring further work in mathematical statistics \cite{mei:van:bue2009, ras:wai:yu2011, gay:ing2012}. 

A class of kernels leading to random fields with additive paths, in the sense detailed below (See also \cite{liu2013}), has recently been considered in \cite{afst_nico}. 
Calling a function $f \in \mathbb{R}^D$ (with multidimensional source space $D=\prod_{i}^{d} D_{i}$ where $D_{i} \subset \mathbb{R}$) additive when there exists $f_{i} \in \mathbb{R}^{D_{i}} \ (1\leq i \leq d)$ such that 
$\forall \mathbf{x}=(x_{1}, \ldots, x_{d}) \in D, \ {f(\mathbf{x}) = \sum_{i=1}^{d} f_{i}(x_{i})}$, it was indeed shown in \cite{afst_nico} that 
\begin{proposition}
\label{propadd}
If a  random field  $Z$ possesses a kernel of the form
\begin{equation}
k(\mathbf{x},\mathbf{x}') = \sum_{i=1}^d k_i(x_i,x_i')
\label{eq1_add_kernels}
\end{equation} 
where the $k_i$'s are arbitrary positive definite kernels over the $D_{i}$'s, 
then $Z$ is additive up to a modification, i.e. there exists a random field $A$ which paths 
are additive functions such that  
$\forall \mathbf{x}\in D \ \mathbb{P}(Z_\mathbf{x} =A_\mathbf{x}) = 1$.
\label{prop1}
\end{proposition}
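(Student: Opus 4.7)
The plan is to exhibit an explicit candidate $A$ built from $Z$ itself, check that its paths are additive by construction, and then verify the modification property by a second-moment computation that uses only the additive form of $k$.

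Concretely, fix an arbitrary reference point $\mathbf{x}^0 = (x_1^0,\dots,x_d^0) \in D$ and, for $\mathbf{x}\in D$ and $i=1,\dots,d$, let $\mathbf{x}^{0,i} := (x_1^0,\dots,x_{i-1}^0, x_i, x_{i+1}^0,\dots,x_d^0)$. I would define
\begin{equation*}
A_{\mathbf{x}} \;:=\; \sum_{i=1}^{d} Z_{\mathbf{x}^{0,i}} \;-\; (d-1)\, Z_{\mathbf{x}^0}.
\end{equation*}
Observe that $\omega$-by-$\omega$, the map $\mathbf{x}\mapsto A_{\mathbf{x}}(\omega)$ is an explicit sum of one-variable functions (the $i$-th term depends only on $x_i$, the last is constant), so every realisation of $A$ is additive. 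It remains only to prove $\mathbb{P}(Z_{\mathbf{x}}=A_{\mathbf{x}})=1$ for every $\mathbf{x}$.

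For this, since $Z$ is assumed centred, it suffices to check that $\mathrm{Var}(Z_{\mathbf{x}} - A_{\mathbf{x}}) = 0$, which by bilinearity reduces to evaluating $k(\mathbf{x},\mathbf{x})$, $\mathrm{Cov}(Z_{\mathbf{x}}, A_{\mathbf{x}})$ and $\mathrm{Var}(A_{\mathbf{x}})$ under the assumption $k(\mathbf{u},\mathbf{v})=\sum_{j=1}^d k_j(u_j,v_j)$. The key algebraic observation is that $k_j(u_j,v_j)$ depends only on the $j$-th coordinates, so that whenever one of the coordinates is replaced by the reference $x_j^0$, only the $j$-th summand is affected. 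A direct expansion then shows $\mathrm{Cov}(Z_{\mathbf{x}}, Z_{\mathbf{x}^{0,i}}) = k_i(x_i,x_i) + \sum_{j\neq i} k_j(x_j,x_j^0)$, etc.; summing over $i$, the cross terms involving reference values collapse, and one obtains $\mathrm{Cov}(Z_{\mathbf{x}}, A_{\mathbf{x}}) = \mathrm{Var}(A_{\mathbf{x}}) = \sum_{i=1}^d k_i(x_i,x_i) = k(\mathbf{x},\mathbf{x})$, whence $\mathrm{Var}(Z_{\mathbf{x}}-A_{\mathbf{x}})=0$.

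The main obstacle is purely bookkeeping: in $\mathrm{Var}(A_{\mathbf{x}})$ one must correctly tally the $\binom{d}{2}$ cross-covariances among the $Z_{\mathbf{x}^{0,i}}$'s together with the cross-terms against $-(d-1)Z_{\mathbf{x}^0}$, and verify that all contributions involving $k_j(x_j^0,x_j^0)$ and $k_j(x_j,x_j^0)$ cancel. A more conceptual alternative, anticipating Section~\ref{sec:coc}, is to recognise that $f$ is additive if and only if $f = Tf$ for the operator $Tf(\mathbf{x}) := \sum_{i=1}^d f(\mathbf{x}^{0,i}) - (d-1)f(\mathbf{x}^0)$, which is a linear combination of composition operators; a short computation shows that the argumentwise invariance $T k(\cdot,\mathbf{x}) = k(\cdot,\mathbf{x})$ is equivalent to the additive form \eqref{eq1_add_kernels}, so the conclusion becomes a direct instance of the main result of the next section.
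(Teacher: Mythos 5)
Your construction is correct and essentially follows the paper's own route: your $A$ is exactly $T(Z)$ for the combination of composition operators of Remark~\ref{rmk:additivefun} (with reference point $\mathbf{a}=\mathbf{x}^0\in D$), and your second-moment verification is the specialisation to this particular $T$ of the ``$\Leftarrow$'' direction of Proposition~\ref{prop:kernelProcess}; the cancellations you assert do hold, giving $\operatorname{cov}(Z_{\mathbf{x}},A_{\mathbf{x}})=\operatorname{var}(A_{\mathbf{x}})=\sum_{i=1}^d k_i(x_i,x_i)=k(\mathbf{x},\mathbf{x})$, hence $\operatorname{var}(Z_{\mathbf{x}}-A_{\mathbf{x}})=0$ (as in the paper, centredness and square-integrability of $Z$ are what turn this into $\mathbb{P}(Z_{\mathbf{x}}=A_{\mathbf{x}})=1$, so these standing assumptions should be stated). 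One caveat on your closing remark: the argumentwise invariance $T(k(\cdot,\mathbf{x}'))=k(\cdot,\mathbf{x}')$ is implied by, but not equivalent to, the form of Eq.~\ref{eq1_add_kernels}; it is equivalent to membership in the strictly larger class of Eq.~\ref{additive_dependent} (Corollary~\ref{additive_dependent}), and indeed the paper stresses that kernels of the form \ref{eq1_add_kernels} are not the only ones producing additive paths --- only the implication you actually use (additive form $\Rightarrow$ argumentwise invariance) is needed for this proposition.
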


\noindent
One may wonder whether kernels of the form $k(\mathbf{x},\mathbf{x}') = \sum_{i=1}^d k_i(x_i,x_i')$ 
are the only ones giving birth to centred random fields with additive paths. 
The answer to this question turns out to be negative,  as will be established in Corollary \ref{additive_dependent}.   

\medskip

\noindent
In a similar fashion, \cite{afst_david} gives a characterization of kernels which associated centred random fields have their paths invariant under a finite group action. 
\noindent
Let $G$ be a finite group acting on D via a measurable action 
\begin{equation*}
\Phi : (\mathbf{x},g) \in D\times G \longrightarrow \Phi(\mathbf{x},g)=g.\mathbf{x} \in D
\end{equation*}

\begin{proposition}
$Z$ has invariant paths under $\Phi$ (\textit{up to a modification}) if and only if $k$ is argumentwise invariant: 
$\forall (\mathbf{x}, g) \in D \times G, \ k(g.\mathbf{x}, \cdot) = k(\mathbf{x}, \cdot)$.
\label{prop2}
\end{proposition}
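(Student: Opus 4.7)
The plan is to prove the biconditional by introducing a distinguished modification $\tilde{Z}$ of $Z$ that realises the path invariance explicitly, namely the Reynolds-type group average. The "only if" direction is essentially immediate from covariance bilinearity, and the entire work of the proof goes into the converse, where the averaged field must be shown both to have pathwise invariance and to coincide with $Z$ up to a modification.

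For the direction "$Z$ has invariant paths $\Rightarrow$ $k$ is argumentwise invariant", I would fix $\mathbf{x}, \mathbf{x}' \in D$ and $g \in G$, and work with a modification $\tilde{Z}$ of $Z$ whose paths satisfy $\tilde{Z}_{g.\mathbf{x}}(\omega) = \tilde{Z}_{\mathbf{x}}(\omega)$ on an event of probability one. Since modifications share the same finite-dimensional covariances,
$$k(g.\mathbf{x}, \mathbf{x}') = \mathrm{Cov}(\tilde{Z}_{g.\mathbf{x}}, \tilde{Z}_{\mathbf{x}'}) = \mathrm{Cov}(\tilde{Z}_{\mathbf{x}}, \tilde{Z}_{\mathbf{x}'}) = k(\mathbf{x}, \mathbf{x}'),$$
which is precisely $k(g.\mathbf{x}, \cdot) = k(\mathbf{x}, \cdot)$.

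For the converse, assuming $k$ is argumentwise invariant, I would construct the candidate modification by averaging over the group orbit,
$$\tilde{Z}_{\mathbf{x}} := \frac{1}{|G|} \sum_{g \in G} Z_{g.\mathbf{x}}.$$
Finiteness of $G$ and measurability of $\Phi$ make this a bona fide random field. Pathwise invariance is verified directly: for any $h \in G$, using the group-action identity $g.(h.\mathbf{x}) = (gh).\mathbf{x}$ and the fact that $g \mapsto gh$ is a bijection of $G$, one reindexes the sum to obtain $\tilde{Z}_{h.\mathbf{x}}(\omega) = \tilde{Z}_{\mathbf{x}}(\omega)$ for every $\omega$. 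It then remains to show that $\tilde{Z}$ is a modification of $Z$. I would do this via a second-moment computation: expand
$$\mathbb{E}\bigl[(\tilde{Z}_{\mathbf{x}} - Z_{\mathbf{x}})^2\bigr] = \mathbb{E}[\tilde{Z}_{\mathbf{x}}^2] - 2\,\mathbb{E}[\tilde{Z}_{\mathbf{x}} Z_{\mathbf{x}}] + k(\mathbf{x},\mathbf{x}),$$
and note that argumentwise invariance together with symmetry of $k$ reduces every cross term $k(g.\mathbf{x}, g'.\mathbf{x})$ or $k(g.\mathbf{x}, \mathbf{x})$ to $k(\mathbf{x},\mathbf{x})$. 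The expression collapses to $k(\mathbf{x},\mathbf{x}) - 2k(\mathbf{x},\mathbf{x}) + k(\mathbf{x},\mathbf{x}) = 0$, yielding $\tilde{Z}_{\mathbf{x}} = Z_{\mathbf{x}}$ almost surely.

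The main potential obstacle is conceptual rather than technical: one must distinguish "invariance in the strict pathwise sense for $Z$ itself" from "invariance holding only for some modification of $Z$", and the proof extracts the latter through the averaging device. The crucial structural hypothesis is finiteness of $G$, since it is what allows the Reynolds sum to be defined sample-pathwise without measure-theoretic complications; an extension to compact groups would require replacing the sum by an integral against Haar measure, together with a Fubini/measurability argument that is not needed here.
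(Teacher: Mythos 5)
Your proof is correct and follows essentially the same route as the paper: the paper recovers this proposition as the special case of Proposition~\ref{prop:kernelProcess} with $T$ the group-averaging (Reynolds) combination of composition operators $T(f)=\frac{1}{\#G}\sum_{i} f(g_i.\cdot)$, whose converse direction is exactly your variance-collapse computation $\operatorname{E}\bigl[(\tilde{Z}_{\mathbf{x}}-Z_{\mathbf{x}})^2\bigr]=0$, and whose forward direction is the same covariance identity for a modification. You merely unroll that general argument explicitly, including the (correct) reindexing step showing the averaged field has invariant paths, which the paper leaves implicit in Example~\ref{ex_CCO_GroupInvariance}.
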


\noindent
We show in Proposition \ref{prop:kernelProcess} that both Propositions \ref{prop1} and \ref{prop2} are sub-cases of a general result on square-integrable random fields invariant under the class of \textit{combination of composition operators} (see Definition~\ref{def:comofcom}). 
A characterization of kernels leading to random fields possessing additive paths is given in Corollary \ref{additive_dependent}, and it is then shown that having the form of Eq. \ref{eq1_add_kernels} is not necessary. Another by-product of Proposition \ref{prop:kernelProcess} is a new proof of Proposition~\ref{prop2} relying on a particular class of combination of composition operators, as illustrated in Example~\ref{ex_CCO_GroupInvariance}. Let us now introduce the set up of composition operators (See, e.g., \cite{sin:man93})  and their combinations. 

\subsection{Composition operators and their combinations}

\begin{definition}
Let us consider an arbitrary function $v: \mathbf{x} \in D \longrightarrow v(\mathbf{x}) \in D$.
The \textit{composition operator $T_{v}$ with symbol $v$} is defined as follows: 
\begin{equation*}
T_{v}: f \in \mathbb{R}^{D} \longrightarrow T_{v}(f) = f \circ v \in \mathbb{R}^{D}
\end{equation*}
\end{definition}

\begin{remark}
\noindent
Such operators can be naturally extended to random fields indexed by $D$: 
\begin{equation*}
\forall \mathbf{x} \in D, \ T_{v}(Z)_{\mathbf{x}} = Z_{v(\mathbf{x})}
\end{equation*}
\end{remark}

\begin{definition}
We call \textit{combination of composition operators with symbols $v_{i} \in D^D$ 
and weights $\alpha_{i} \in \mathbb{R} \ 
(1\leq i \leq q, q \in \mathbb{N}\backslash\{0\})$} the operator 
\begin{equation*}
T = \sum_{i=1}^{q} \alpha_{i} T_{v_{i}}
\end{equation*}
\label{def:comofcom}
\end{definition}
%
\subsection{Invariance under a combination of composition operators}

\begin{proposition}
\label{prop:kernelProcess}
Let $Z$ be a square-integrable centred random field with covariance kernel $k$. Then $Z$ equals $T(Z)$ up to a modification, i.e. 
\begin{equation*}
\forall \mathbf{x} \in D, \ \mathbb{P}\left( Z_{\mathbf{x}}= T(Z)_{\mathbf{x}} \right) = 1
\end{equation*}
\noindent
if and only if $k$ is $T$-invariant, i.e. 
\begin{equation}
\forall \mathbf{x}' \in D, \ T(k(\cdot,\mathbf{x}')) = k(\cdot,\mathbf{x}').
\end{equation}
\end{proposition}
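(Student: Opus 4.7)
The plan is to reduce the almost-sure equality $Z_x = T(Z)_x$ to a second-moment statement, which can be unfolded in terms of $k$ by bilinearity. Concretely, since both $Z_x$ and $T(Z)_x = \sum_{i=1}^q \alpha_i Z_{v_i(x)}$ are square-integrable random variables, the condition $\mathbb{P}(Z_x = T(Z)_x) = 1$ is equivalent to $\mathbb{E}[(Z_x - T(Z)_x)^2] = 0$ for every $\mathbf{x} \in D$. Both directions of the equivalence will then follow from manipulating this $L^2$ quantity.

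For the ``only if'' direction, I would not square the difference directly, but rather observe that if $Z_x = T(Z)_x$ a.s., then $\mathbb{E}[(Z_x - T(Z)_x) Z_{x'}] = 0$ for every $\mathbf{x}, \mathbf{x}' \in D$. Using the centeredness of $Z$ together with bilinearity,
\begin{equation*}
\mathbb{E}[Z_x Z_{x'}] = \mathbb{E}\Bigl[ \sum_{i=1}^q \alpha_i Z_{v_i(\mathbf{x})} Z_{x'}\Bigr] = \sum_{i=1}^q \alpha_i k(v_i(\mathbf{x}), \mathbf{x}') = T(k(\cdot,\mathbf{x}'))(\mathbf{x}),
\end{equation*}
so $k(\mathbf{x},\mathbf{x}') = T(k(\cdot,\mathbf{x}'))(\mathbf{x})$, which is exactly the $T$-invariance asserted.

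For the ``if'' direction, I would expand
\begin{equation*}
\mathbb{E}[(Z_x - T(Z)_x)^2] = k(\mathbf{x},\mathbf{x}) - 2\, \mathbb{E}[Z_x T(Z)_x] + \mathbb{E}[T(Z)_x^2].
\end{equation*}
The cross term is $\sum_i \alpha_i k(\mathbf{x}, v_i(\mathbf{x})) = T(k(\cdot,\mathbf{x}))(\mathbf{x})$ using the symmetry of $k$, which by the hypothesis $T(k(\cdot,\mathbf{x})) = k(\cdot,\mathbf{x})$ equals $k(\mathbf{x},\mathbf{x})$. For the last term, applying bilinearity gives $\sum_{i,j} \alpha_i \alpha_j k(v_i(\mathbf{x}), v_j(\mathbf{x})) = \sum_j \alpha_j T(k(\cdot, v_j(\mathbf{x})))(\mathbf{x})$; invoking $T$-invariance once collapses this to $\sum_j \alpha_j k(\mathbf{x}, v_j(\mathbf{x})) = T(k(\cdot,\mathbf{x}))(\mathbf{x})$, and a second application reduces it to $k(\mathbf{x},\mathbf{x})$. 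The three terms thus combine to $0$, yielding the desired a.s. equality.

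The proof is essentially a bookkeeping exercise; the only subtle point is keeping clear in which argument of $k$ the operator $T$ acts, which is mildly delicate when computing $\mathbb{E}[T(Z)_x^2]$, as one must apply $T$-invariance successively in the two arguments (using that $k$ is symmetric). There are no measurability or interchange-of-limits issues because $T$ is a \emph{finite} linear combination, so all expectations expand into finitely many covariance evaluations with no convergence questions to address.
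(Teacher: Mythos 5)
Your proof is correct and follows essentially the same route as the paper: the forward direction uses the covariance of $Z_{\mathbf{x}}-T(Z)_{\mathbf{x}}$ with $Z_{\mathbf{x}'}$, and the converse reduces the a.s.\ equality to the vanishing of a second moment, expanded by bilinearity with two successive applications of the argumentwise invariance (the paper phrases this as $\operatorname{var}(T(Z)_{\mathbf{x}})=\operatorname{cov}(Z_{\mathbf{x}},T(Z)_{\mathbf{x}})=\operatorname{var}(Z_{\mathbf{x}})$, which is the same computation). No gaps; the finiteness of the combination indeed disposes of any convergence issues, exactly as in the paper.
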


\begin{proof}
\noindent
$\Rightarrow$: let us fix arbitrary $\mathbf{x},\ \mathbf{x}' \in D$. Since $Z_{\mathbf{x}}$ is a modification of $T(Z)_{\mathbf{x}}$, we have $\text{cov}(Z_{\mathbf{x}}, Z_{\mathbf{x}'})=\text{cov}(T(Z)_{\mathbf{x}}, Z_{\mathbf{x}'}) = \text{cov}(\sum_{i=1}^{q} \alpha_{i}Z_{v_{i}(\mathbf{x})}, Z_{\mathbf{x}'} )$, and so:
$$ k(\mathbf{x},\mathbf{x}') = \sum_{i=1}^{q} \alpha_{i} k( v_{i}(\mathbf{x}), \mathbf{x}') = T(k(\cdot,\mathbf{x}')) (\mathbf{x}).$$

\noindent
$\Leftarrow$: Using $\forall \mathbf{x}' \in D \ T(k(\cdot,\mathbf{x}')) = k(\cdot,\mathbf{x}')$, we get 
$\text{var}(T(Z)_{\mathbf{x}}) = \text{cov}(Z_{\mathbf{x}}, T(Z)_{\mathbf{x}}) = \text{var}(Z_{\mathbf{x}})$, so 
$\text{var}(Z_\mathbf{x} - T(Z)_\mathbf{x}) = 0$. 
Since $Z$ is centred, so is $T(Z)$, and hence $Z_{\mathbf{x}}  \stackrel{a.s.}{=}  T(Z)_{\mathbf{x}}$.
\end{proof}

\begin{remark}
As noted in \cite{rev:yor91}, two processes modifications of each other that are almost surely continuous are indistinguishable. Almost sure results may then directly be obtained for processes with almost surely continuous paths.  
\end{remark}

\begin{example}[Case of group-invariance]
Prop.~\ref{prop2} now appears as a special case of 
Prop.~\ref{prop:kernelProcess} with $T(f) = \sum_{i=1}^{\# G} \frac{1}{\# G} f(v_{i}(\cdot))$ where $v_{i} (\mathbf{x}) := g_{i}.\mathbf{x} \ (1\leq i \leq \# G) $.

For instance, let us consider the following functions over $[-1,1]^2 \times [-1,1]^2$:
\begin{equation}
\begin{split}
k_1(\mathbf{x},\mathbf{y}) & = min(\rho_\mathbf{x} \cos(\tilde{\theta}_\mathbf{x}), \rho_\mathbf{y} \cos(\tilde{\theta}_\mathbf{y})) \times min(\rho_\mathbf{x} \sin(\tilde{\theta}_\mathbf{x}), \rho_\mathbf{y} \sin(\tilde{\theta}_\mathbf{y})) \\
k_2(\mathbf{x},\mathbf{y}) & = min(\rho_\mathbf{x} , \rho_\mathbf{y} )
\end{split}
\end{equation}
where $\rho_{\mathbf{x}},\theta_{\mathbf{x}}$ (resp. $\rho_\mathbf{y},\theta_\mathbf{y}$) are the polar coordinates of $\mathbf{x}$ (resp. $\mathbf{y}$) and where $\tilde{\theta} = \theta \ \mathrm{mod} \ \pi/2$. 
$k_{1}$ and $k_{2}$ are positive definite kernels (in the loose sense) as admissible covariances (respectively those of the Brownian Sheet and the Brownian Motion) composed with a change of index, i.e. $k_i(\cdot,\cdot) = k(h(\cdot),h(\cdot))$ with $h$ from $[-1,1]^2$ onto $[0,1]^2$ (resp. from $\mathbb{R}^2$ onto $\mathbb{R}_+$). 
By construction $k_1$ and $k_2$ are argumentwise invariant with respect to rotations around the origin (with angles multiple of $\pi/2$ for $k_1$). As illustrated in Figure~\ref{fig:ex_brown_rot}, Proposition~\ref{prop:kernelProcess} ensures that the sample paths of centred (Gaussian or non-Gaussian) random fields with these kernels inherit their invariance properties. 
Note that $k_{1}$ belongs to the class of kernels argumentwise invariant under the action of a finite group treated above, while the argumentwise invariance of $k_{2}$ under the action of an infinite group can actually be seen as invariance under any composition operator with symbol of the form 
$v(\mathbf{x})=\rho_{\mathbf{x}} \mathbf{u}$ where $\mathbf{u}$ is an arbitrary point on the unit circle. 

\begin{figure}
\centering
\begin{subfigure}[b]{0.45\textwidth}
    \centering
    \includegraphics[width=\textwidth]{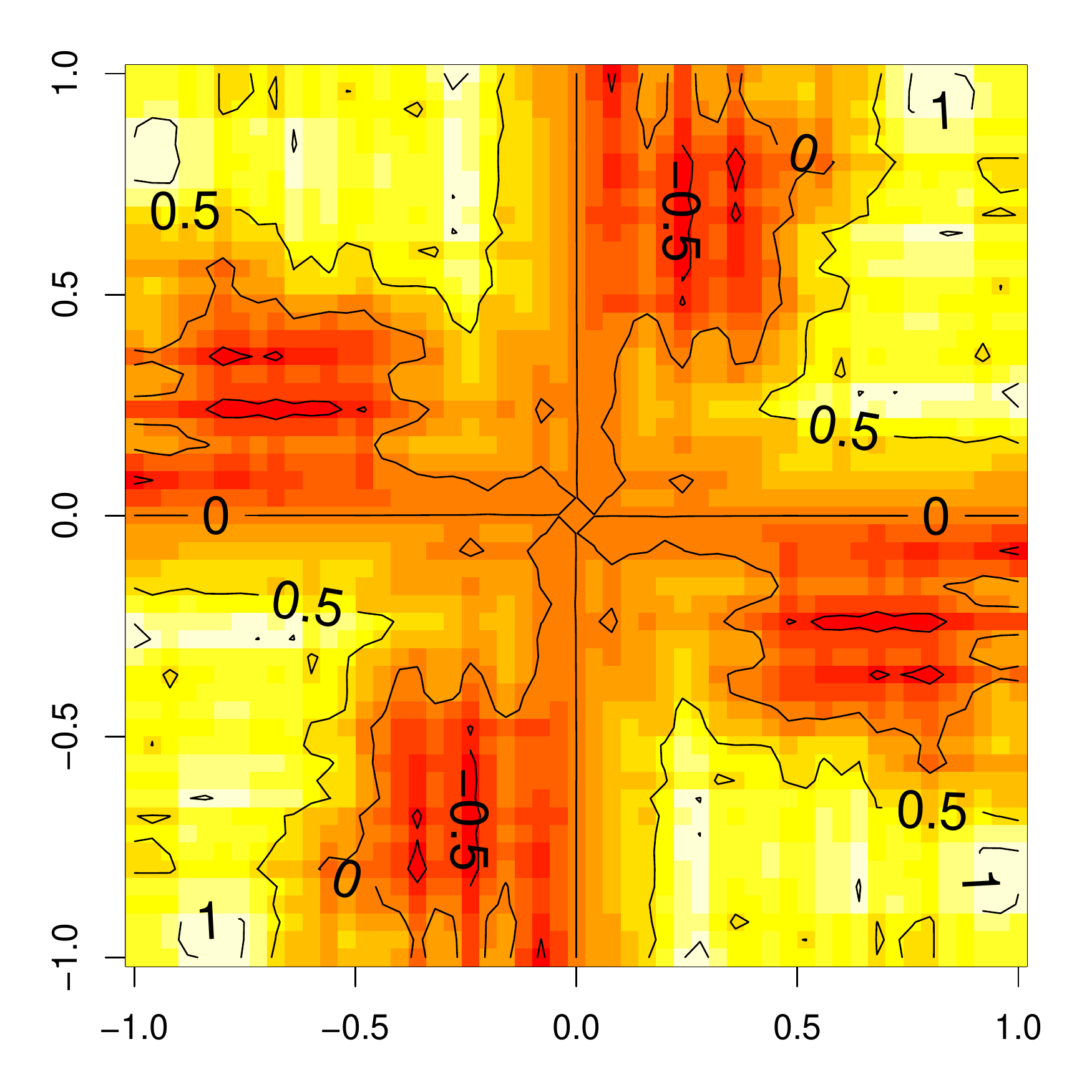}
    \caption{Path of a GRF with kernel $k_1$}
\end{subfigure} \qquad
\begin{subfigure}[b]{0.45\textwidth}
    \centering
    \includegraphics[width=\textwidth]{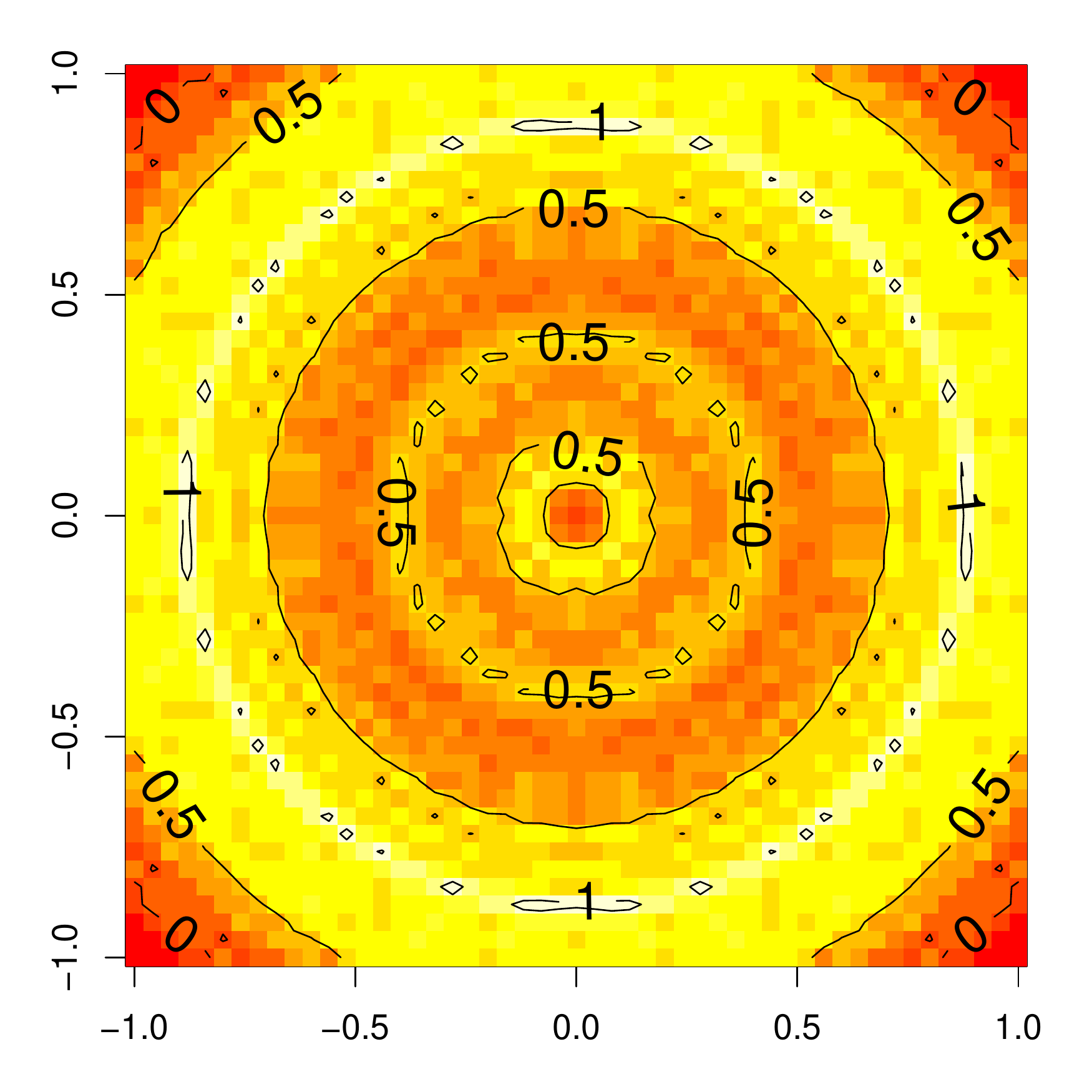}
    \caption{Path of a GRF with kernel $k_2$}
\end{subfigure}%
\caption{Sample paths of centred (Gaussian) random fields with two kinds of argumentwise rotation invariant kernels. The fact that the contour lines are not exactly rotation invariant on the right panel can be explained by the 2-dimensional mesh used for plotting.}
\label{fig:ex_brown_rot}
\end{figure}

\label{ex_CCO_GroupInvariance}
\end{example}

Beyond group-invariance, Proposition~\ref{prop:kernelProcess} also has implications concerning the sparsity of multivariate random field paths, as detailed in the following section on additivity, leading to a generalization of Proposition~\ref{propadd}.

\subsection{Kernels of centred random fields with additive paths}

Let us first show how the additivity property boils down to an invariance property under some specific class of combination of composition operators. 

\begin{remark}
\label{rmk:additivefun}
Assuming $\mathbf{a} \in D$, a function $f: D \rightarrow \mathbb{R}$ is additive if and only if  
$f$ is invariant under the following combination of composition operators:
\begin{equation}
 T(f)(\mathbf{x}) = \sum_{i=1}^{d} f( v_{i} (\mathbf{x}) ) - (d-1) f(v_{d+1}(\mathbf{x})) \ \ \ \ (\mathbf{x} \in D)
\label{eq_add_op}
\end{equation}
\noindent
where $v_{i} (\mathbf{x}) := (a_{1}, \ldots, a_{i-1}, \underbrace{x_{i}}_{\text{ith coordinate}}, a_{i+1}, \ldots, a_{d})$, 
and $v_{d+1}(\mathbf{x}) := \mathbf{a}$.
\end{remark}

\begin{corollary}
A centred random field $Z$ possessing a covariance kernel $k$ has additive paths (up to a modification) if and only if $k$ is a positive definite kernel of the form
\begin{equation}
k(\mathbf{x}, \mathbf{x}') =  \sum_{i=1}^{d} \sum_{j=1}^{d} k_{ij}(x_{i}, x_{j}')
\label{additive_dependent}
\end{equation}
\end{corollary}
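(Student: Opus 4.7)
Let $T$ denote the specific combination of composition operators introduced in Remark \ref{rmk:additivefun}. The strategy is to bridge that remark with Proposition \ref{prop:kernelProcess}: the random field $Z$ has additive paths up to a modification if and only if $T(Z)$ is a modification of $Z$, which by Proposition \ref{prop:kernelProcess} happens if and only if $T(k(\cdot,\mathbf{x}'))(\mathbf{x}) = k(\mathbf{x},\mathbf{x}')$ for every $\mathbf{x}, \mathbf{x}' \in D$. It thus suffices to characterize the kernels satisfying this argumentwise $T$-invariance.

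For the sufficient direction, I would start from the assumed decomposition $k(\mathbf{x},\mathbf{x}') = \sum_{i,j=1}^{d} k_{ij}(x_i,x_j')$ and compute $T(k(\cdot,\mathbf{x}'))(\mathbf{x})$ directly. Since $v_i(\mathbf{x})$ agrees with $\mathbf{a}$ in every coordinate but the $i$-th (where it equals $x_i$), one has $k(v_i(\mathbf{x}),\mathbf{x}') = \sum_{q} k_{iq}(x_i,x_q') + \sum_{p\neq i}\sum_q k_{pq}(a_p,x_q')$. Summing over $i$ produces $k(\mathbf{x},\mathbf{x}') + (d-1)\,k(\mathbf{a},\mathbf{x}')$, and subtracting the $(d-1)\,k(\mathbf{a},\mathbf{x}')$ coming from $T$ yields exactly $k(\mathbf{x},\mathbf{x}')$, establishing $T$-invariance.

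For the necessary direction, I would assume that $Z$ has an additive modification, which by Remark \ref{rmk:additivefun} and Proposition \ref{prop:kernelProcess} forces each section $k(\cdot,\mathbf{x}')$ to be $T$-invariant, hence additive as a function of its first argument. By symmetry of covariance kernels, the same holds in the second argument. Applying the explicit identity $f(\mathbf{x}) = \sum_i f(v_i(\mathbf{x})) - (d-1) f(\mathbf{a})$ successively to both variables of $k$ gives
\begin{equation*}
k(\mathbf{x},\mathbf{x}') = \sum_{i,j} k(v_i(\mathbf{x}), v_j(\mathbf{x}')) - (d-1)\sum_i k(v_i(\mathbf{x}), \mathbf{a}) - (d-1)\sum_j k(\mathbf{a}, v_j(\mathbf{x}')) + (d-1)^2 k(\mathbf{a},\mathbf{a}).
\end{equation*}
Each $k(v_i(\mathbf{x}), v_j(\mathbf{x}'))$ depends only on $(x_i, x_j')$, and the remaining univariate or constant corrections can be absorbed into any of the bivariate summands (a function of $x_i$ alone can trivially be viewed as a function of $(x_i,x_j')$ that is constant in $x_j'$). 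After this absorption one recovers the announced form $\sum_{i,j=1}^d k_{ij}(x_i,x_j')$.

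I do not expect a serious analytical obstacle: once Proposition \ref{prop:kernelProcess} and Remark \ref{rmk:additivefun} are in place, the sufficiency is a one-line kernel computation. The only point that requires care is the bookkeeping in the necessity direction, where one must be explicit about how the univariate and constant correction terms are merged into the bivariate pieces so that the resulting decomposition has exactly the prescribed double-sum shape.
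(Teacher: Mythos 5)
Your proposal is correct, and its sufficiency half coincides with the paper's: verify that a kernel of the double-sum form is argumentwise additive (equivalently $T$-invariant for the operator of Remark \ref{rmk:additivefun}) and invoke Proposition \ref{prop:kernelProcess}. The necessity half takes a genuinely different, purely kernel-level route: the paper works probabilistically, decomposing the additive modification $A$ as $A_{\mathbf{x}}=\sum_i A^i_{x_i}$ with $A^i_{x_i}=A_{v_i(\mathbf{x})}-\frac{d-1}{d}A_{\mathbf{a}}$ and reading off $k_{ij}(x_i,x_j'):=\operatorname{cov}(A^i_{x_i},A^j_{x_j'})$, whereas you deduce from Proposition \ref{prop:kernelProcess} that $k$ is argumentwise additive in each variable (using symmetry), apply the identity $f(\mathbf{x})=\sum_i f(v_i(\mathbf{x}))-(d-1)f(\mathbf{a})$ in both arguments, and absorb the univariate and constant corrections into the bivariate pieces (e.g.\ $k_{ij}(x_i,x_j'):=k(v_i(\mathbf{x}),v_j(\mathbf{x}'))-\frac{d-1}{d}k(v_i(\mathbf{x}),\mathbf{a})-\frac{d-1}{d}k(\mathbf{a},v_j(\mathbf{x}'))+\frac{(d-1)^2}{d^2}k(\mathbf{a},\mathbf{a})$, which in fact reproduces exactly the paper's cross-covariances). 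Your version avoids introducing the auxiliary univariate fields and makes the bookkeeping explicit at the kernel level; the paper's version buys an immediate probabilistic interpretation of the $k_{ij}$'s as cross-covariances of the univariate components. The only step you should spell out is the bridging equivalence ``$Z$ has an additive modification iff $T(Z)$ is a modification of $Z$'': the nontrivial direction follows because, for fixed $\mathbf{x}$, the finitely many events $\{Z_{v_i(\mathbf{x})}=A_{v_i(\mathbf{x})}\}$ and $\{Z_{\mathbf{a}}=A_{\mathbf{a}}\}$ hold simultaneously almost surely, so $T(Z)_{\mathbf{x}}=T(A)_{\mathbf{x}}=A_{\mathbf{x}}=Z_{\mathbf{x}}$ a.s., the middle equality being Remark \ref{rmk:additivefun} applied to the additive paths of $A$.
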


\begin{proof}
\noindent
If $Z$ has additive paths up to a modification, there exists a random field $(A_{\mathbf{x}})_{\mathbf{x} \in D}$ with additive 
paths such that $\forall \mathbf{x} \in D \ \mathbb{P}(Z_{\mathbf{x}}=A_{\mathbf{x}})=1$, and so $Z$ and $A$ have the same covariance kernel. 
Now, $A$ having additive paths, Remark \ref{rmk:additivefun} implies that 
$A_{\mathbf{x}} = \sum_{i=1}^{d} A_{v_{i}(\mathbf{x})} - (d-1)A_{v_{d+1}(\mathbf{x})} = \sum_{i=1}^{d} A^{i}_{\mathbf{x}_{i}}$, 
where $A^{i}_{\mathbf{x}_{i}} := A_{v_{i}(\mathbf{x})} - \frac{(d-1)}{d}A_{v_{d+1}(\mathbf{x})}$, 
so Equation \ref{additive_dependent} holds with $k_{ij}(\mathbf{x}_{i}, \mathbf{x}_{j}') := \text{cov}(A^{i}_{\mathbf{x}_{i}}, A^{j}_{\mathbf{x}_{j}'})$.
Reciprocally, from Proposition $3$, we know that it suffices for $Z$ to have additive paths that $k(\cdot,\mathbf{x}')$ is additive $\forall \mathbf{x}' \in D$. 
For a kernel $k$ such as in Eq. \ref{additive_dependent} and an arbitrary $ \mathbf{x}' \in D$, setting 
$$\forall \mathbf{x}_{i} \in D_{i}, \ \widetilde{k}_{i}(\mathbf{x}_{i}, \mathbf{x}') := 
\sum\limits_{\substack{j=1 }}^{d} k_{ij}(\mathbf{x}_{i}, \mathbf{x}_{j}') \ \ \ (1\leq i \leq d)$$ 
\noindent
we get $k(\mathbf{x},\mathbf{x}') = \sum_{i=1}^{d} \widetilde{k}_{i}(\mathbf{x}_{i}, \mathbf{x}') \ 
(\mathbf{x} \in D)$, so $k(\cdot,\mathbf{x}')$ is additive. 
\end{proof}

\begin{example}
Let us consider the following kernel over $\mathbb{R}^d \times \mathbb{R}^d$:
\begin{equation}
k(\mathbf{x},\mathbf{y}) = \sum_{i,j=1}^d \int_\mathbb{R} \kappa_i (x_i - u) \kappa_j (y_j - u) \mathrm{d}u  
\end{equation}
where the $\kappa_i$ are smoothing kernels over $\mathbb{R}$. Previous results on vector-valued random fields ensure that $k$ is a valid covariance function \cite{fricker2012multivariate}. Furthermore, the structure of $k$ corresponds to an additive kernel in the sense of Equation~\ref{additive_dependent}. According to Corollary~\ref{additive_dependent}, a random field with such kernel has additive paths (up to a modification), with univariate marginals  exhibiting possible cross-correlations.
\end{example}

\section{Extension to further operators. Focus on the Gaussian case}
\label{inv_Gaussian_case}

Composition operators constitute a remarkable class of linear maps since they can be defined on function spaces without any restriction. 
In particular, they similarly apply to random field paths or to kernel functions (with one argument fixed), 
so that taking out of a covariance a (combination of) composition(s) applied to a random field
and turning it into a (combination of) composition(s) on the covariance kernel appears as a natural operation. 

For more general classes of operators, however, operators on paths and operators on kernels are two different mathematical objects: It is a priori not obvious how to transform operators on paths into operators on the kernel, and even less straightforward to know when and how it is possible to define an operator on paths corresponding to a given operator on the kernel space. 

Given a linear operator $T:\mathbb{R}^{D} \to \mathbb{R}^{D}$ and a second-order centred process $Z$ such that $T(Z)$ is second order, 
generalizing the approach that lead to Prop.~\ref{prop:kernelProcess} enables us to characterize pathwise invariances of $Z$ by $T$ relying on second-order properties of the joint process $(Z_\mathbf{x}, T(Z)_\mathbf{x})_{\mathbf{x} \in D}$, without any additional assumption concerning $Z$'s probability distribution:

\begin{proposition} 
$Z=T(Z)$ up to a modification if and only if 
\begin{equation}
\label{eq_toto}
k(\mathbf{x},\mathbf{x}) = 2 \operatorname{cov}(T(Z)_\mathbf{x}, Z_\mathbf{x}) - \operatorname{var}(T(Z)_\mathbf{x}) 
\ \ \ \ (\mathbf{x} \in D ).
\end{equation}
\begin{proof}
Under the square-integrability and zero-mean hypotheses on $Z$ and $T(Z)$, $\operatorname{var}(T(Z)_\mathbf{x} - Z_\mathbf{x})=0$ is equivalent to $\mathbb{P}(T(Z)_\mathbf{x} = Z_\mathbf{x})=1$. 
\end{proof}
\end{proposition}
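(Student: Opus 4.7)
My plan is to reduce the statement to a one-line second-order calculation, closely mirroring the $\Leftarrow$ direction in the proof of Proposition~\ref{prop:kernelProcess}. I would fix $\mathbf{x} \in D$ arbitrary and isolate the key random variable $\Delta_\mathbf{x} := T(Z)_\mathbf{x} - Z_\mathbf{x}$. By the standing hypotheses, $\Delta_\mathbf{x}$ belongs to $L^2$ (both $Z_\mathbf{x}$ and $T(Z)_\mathbf{x}$ are second-order) and is centred (since $Z$ is zero-mean and $T(Z)$ is zero-mean under the same standing assumptions). The elementary $L^2$ fact I would invoke is that for a centred square-integrable random variable $Y$, one has $Y = 0$ almost surely if and only if $\operatorname{var}(Y) = 0$.

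Next I would expand bilinearly:
\[
\operatorname{var}(\Delta_\mathbf{x}) = \operatorname{var}(T(Z)_\mathbf{x}) - 2 \operatorname{cov}(T(Z)_\mathbf{x}, Z_\mathbf{x}) + k(\mathbf{x},\mathbf{x}),
\]
and observe that this quantity vanishes precisely when
\[
k(\mathbf{x},\mathbf{x}) = 2 \operatorname{cov}(T(Z)_\mathbf{x}, Z_\mathbf{x}) - \operatorname{var}(T(Z)_\mathbf{x}).
\]
Combining the two equivalences gives, for each fixed $\mathbf{x}$, the equivalence between $\mathbb{P}(T(Z)_\mathbf{x} = Z_\mathbf{x}) = 1$ and Equation~(\ref{eq_toto}) evaluated at $\mathbf{x}$. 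Since the argument is pointwise in $\mathbf{x}$ and the quantifier ``$\forall \mathbf{x} \in D$'' sits on both sides, this yields the claimed equivalence with the ``up to a modification'' notion.

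There is really no hard step here: the proposition is essentially a restatement of the $L^2$-vanishing criterion, packaged so that the right-hand side of Equation~(\ref{eq_toto}) only involves second-order quantities of the joint process $(Z_\mathbf{x}, T(Z)_\mathbf{x})$ rather than a full expression of $T$ on paths. The only subtlety (rather than obstacle) worth flagging is the implicit assumption that $T(Z)$ is centred, which is what allows the variance of $\Delta_\mathbf{x}$ to serve as a direct detector of almost-sure equality; this is automatic in the composition-operator setting of Section~\ref{sec:coc} and will in general follow from the linear structure assumed on $T$.
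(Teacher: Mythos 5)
Your proposal is correct and is essentially the paper's own argument: the paper's proof simply states that, under the square-integrability and zero-mean hypotheses on $Z$ and $T(Z)$, $\operatorname{var}(T(Z)_\mathbf{x}-Z_\mathbf{x})=0$ is equivalent to $\mathbb{P}(T(Z)_\mathbf{x}=Z_\mathbf{x})=1$, and your bilinear expansion of $\operatorname{var}(T(Z)_\mathbf{x}-Z_\mathbf{x})$ just makes explicit the identification of this vanishing with Equation~(\ref{eq_toto}) that the paper leaves implicit.
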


In the particular case of combinations of composition operators covered by Prop.~\ref{prop:kernelProcess}, it was possible to take $T$ out of the covariance and variance in the right hand side of Eq.~\ref{eq_toto}, leading to a further equivalence between pathwise invariance of $Z$ and invariance of $k$ under $T$. 
In greater generality, however, it is not straightforward how $T$ can be taken out of terms such as $\operatorname{cov}(T(Z)_\mathbf{x}, Z_\mathbf{x})$. 

We show in Section~\ref{subsec:RKHS} below that in case $Z$ is Gaussian and $T$ satisfies some technical condition with respect to $Z$, there exists an operator $\mathcal{T}$ defined over the Reproducing Kernel Hilbert Space associated with $Z$, such that $\text{cov}(T(Z)_{\mathbf{x}}, Z_{\mathbf{x'}}) = 
\mathcal{T}( 
\text{cov}(Z_{\cdot}, Z_{\mathbf{x'}})  
)(\mathbf{x})$, for all $\mathbf{x}, \mathbf{x'} \in D$. 

This construction based on the celebrated \textit{Lo\`eve isometry} \cite{ber:tho04} then enables us extending Prop. \ref{prop:kernelProcess} to a broader class of operators. 

Numerical examples are presented throughout the current section, including simulated paths of Gaussian random fields with argumentwise 
invariant kernels under various (integral and differential) operators, that subsequently serve as a basis to original applications in Gaussian Process regression, presented in Section \ref{sec:gpr}.

\subsection{Operating on the kernel via operators on paths}

We focus here on a centred Gaussian random field $(Z_{\mathbf{x}})_{\mathbf{x}\in D}$ defined over a compact set $D\subset \mathbb{R}^{d}$, 
with covariance kernel $k: D\times D \longrightarrow \mathbb{R}$. $k$ is here assumed continuous, so that the paths of 
$(Z_{\mathbf{x}})_{\mathbf{x}\in D}$ belong to some subspace of the space $\mathcal{B}$  of continuous functions over $D$, and are in particular square-integrable (with respect to Lebesgue's measure on $D$, say) by compacity of $D$.  
Let us further consider a linear map 
$T: \mathbb{R}^{D} \rightarrow \mathbb{R}^{D}$ 
acting on the paths of $Z$ and such that $T(Z)_{\mathbf{x}}$ is centred and square-integrable for all $\mathbf{x}\in D$. 

\medskip

In Proposition~\ref{prop:Tronde} below, the so-called \textit{Lo\`eve isometry} \cite{ber:tho04} allows us to define an operator, derived from $T$, acting on the Reproducing Kernel Hilbert Space (RKHS) associated with $k$. Let us first recall some useful definitions and the isometry in question. 
The RKHS $\mathcal{H}$ associated with $k$ \cite{aro50} can be defined as functional completion of the function space spanned by the $k(\cdot, \mathbf{x})$'s: 
$$\mathcal{H}=\overline{\text{span}(k(\cdot, \mathbf{x}), \mathbf{x} \in D)}$$ 
equipped with the scalar product defined by $\langle k(\cdot, \mathbf{x}), k(\cdot, \mathbf{x'}) \rangle_{\mathcal{H}} 
= k(\mathbf{x}, \mathbf{x'})$.
A crucial state-of-the-art result is that $\mathcal{H}$ is isometric to the Hilbert space generated by the random field 
$Z$ \cite{ber:tho04}: 
$$\mathcal{L}(Z) = \overline{\text{span}(Z_{\mathbf{x}}, \mathbf{x} \in D)},$$
where the adherence is taken with respect to the usual $L^{2}(\mathbb{P})$ topology on the space of (equivalence classes of) square-integrable random variables. 

\begin{proposition} (Lo\`eve isometry) 
\label{loeve}
The map $\Psi : \mathcal{H} \rightarrow \mathcal{L}(Z)$ defined by:
$$ k(\cdot, \mathbf{x}) \rightarrow Z_\mathbf{x}$$
for all $\mathbf{x} \in D$ and extended by linearity and continuity, is an isometry from $(\mathcal{L}(Z), \langle \cdot{}, \cdot{}\rangle_{L^2})$ 
to $(\mathcal{H}, \langle \cdot{}, \cdot{}\rangle_{\mathcal{H}})$.  
\end{proposition}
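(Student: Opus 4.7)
The plan is to construct $\Psi$ first on the dense subspace spanned by the reproducing kernel sections $\{k(\cdot,\mathbf{x}):\mathbf{x}\in D\}$, verify that it is inner-product preserving on a distinguished family of generators, and then extend in two stages: by linearity to the full span, and by continuity/density to the completion.

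First, I would show that the putative assignment $k(\cdot,\mathbf{x})\mapsto Z_{\mathbf{x}}$ preserves inner products between generators. This is immediate from the reproducing property together with the centredness of $Z$: on the one hand $\langle k(\cdot,\mathbf{x}),k(\cdot,\mathbf{x}')\rangle_{\mathcal{H}}=k(\mathbf{x},\mathbf{x}')$ by definition of the RKHS inner product, while on the other $\langle Z_{\mathbf{x}},Z_{\mathbf{x}'}\rangle_{L^{2}(\mathbb{P})}=\mathbb{E}[Z_{\mathbf{x}}Z_{\mathbf{x}'}]=\operatorname{cov}(Z_{\mathbf{x}},Z_{\mathbf{x}'})=k(\mathbf{x},\mathbf{x}')$. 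So the two sides agree.

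Second, I would extend $\Psi$ by linearity to the (unclosed) linear span of the $k(\cdot,\mathbf{x})$'s. The key point here is that the extension is well defined: if $\sum_{i}\alpha_{i}k(\cdot,\mathbf{x}_{i})=\sum_{j}\beta_{j}k(\cdot,\mathbf{x}'_{j})$ in $\mathcal{H}$, then the difference has zero $\mathcal{H}$-norm, and by the isometry already verified on generators (extended bilinearly to finite sums) the corresponding random variable $\sum_{i}\alpha_{i}Z_{\mathbf{x}_{i}}-\sum_{j}\beta_{j}Z_{\mathbf{x}'_{j}}$ has zero $L^{2}$-norm, hence equals zero almost surely. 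The same bilinear computation shows that on the span, $\Psi$ preserves the inner product.

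Third, I would extend by continuity. Since $\Psi$ is a linear isometry between pre-Hilbert subspaces, it is uniformly continuous and admits a unique continuous extension to the closure $\mathcal{H}=\overline{\mathrm{span}(k(\cdot,\mathbf{x}),\mathbf{x}\in D)}$, which remains an isometry into $L^{2}(\mathbb{P})$. The image of this extension necessarily lies in the closure of $\mathrm{span}(Z_{\mathbf{x}},\mathbf{x}\in D)$, i.e.\ in $\mathcal{L}(Z)$, and by construction it contains every $Z_{\mathbf{x}}$; density and continuity then give surjectivity onto $\mathcal{L}(Z)$.

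I do not anticipate a serious obstacle: the only thing to be careful about is the well-definedness in the linear extension step (where one must argue via the isometry on generators rather than by choosing coefficients), and the fact that completeness of $L^{2}(\mathbb{P})$ is what makes the continuous extension legitimate. Everything else is a standard density-plus-isometry argument.
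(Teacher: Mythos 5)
Your construction is correct and is the standard textbook argument; note, however, that the paper does not prove this proposition at all --- it quotes the Lo\`eve isometry as a known result and cites Berlinet and Thomas-Agnan \cite{ber:tho04}, so there is no internal proof to compare against. Your three steps (inner-product preservation on the generators via $\operatorname{cov}(Z_{\mathbf{x}},Z_{\mathbf{x}'})=k(\mathbf{x},\mathbf{x}')=\langle k(\cdot,\mathbf{x}),k(\cdot,\mathbf{x}')\rangle_{\mathcal{H}}$, well-definedness of the linear extension argued through the isometry rather than through any choice of representing coefficients, and the continuous extension to the completion) are exactly the right skeleton, and you correctly identify well-definedness as the step where a naive argument goes wrong.

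One point should be tightened: in the final step, ``density and continuity'' alone do not give surjectivity onto $\mathcal{L}(Z)$ --- they only show that $\Psi(\mathcal{H})$ contains the dense subspace $\mathrm{span}(Z_{\mathbf{x}},\mathbf{x}\in D)$, hence is dense in $\mathcal{L}(Z)$. To conclude $\Psi(\mathcal{H})=\mathcal{L}(Z)$ you also need the range to be closed, which follows from the completeness of $\mathcal{H}$: an isometric image of a complete space is complete, hence closed in $L^{2}(\mathbb{P})$. Equivalently, given $U\in\mathcal{L}(Z)$ as an $L^{2}$-limit of finite combinations $\sum_{i}\alpha_{i}Z_{\mathbf{x}_{i}}$, the corresponding elements $\sum_{i}\alpha_{i}k(\cdot,\mathbf{x}_{i})$ form a Cauchy sequence in $\mathcal{H}$ by the isometry, converge to some $h\in\mathcal{H}$ by completeness, and $\Psi(h)=U$ by continuity. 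This is a one-line repair, not a flaw in the approach; with it, your proof is a complete and self-contained justification of the result the paper imports from the literature. (Incidentally, your argument also implicitly corrects the slightly garbled wording of the proposition, which declares $\Psi:\mathcal{H}\rightarrow\mathcal{L}(Z)$ but then describes it as an isometry ``from $\mathcal{L}(Z)$ to $\mathcal{H}$''; the two formulations are of course equivalent since an onto isometry has an isometric inverse.)
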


As shown below, the Lo\`eve isometry allows us to link operators on the paths of $Z$ to corresponding operators on the RKHS, provided that the random variables $T(Z)_\mathbf{x}$ ($\mathbf{x} \in D$) belong to $\mathcal{L}(Z)$:

\begin{proposition}
\label{prop:Tronde}
Let $T: \mathcal{B} \rightarrow \mathbb{R}^{D}$ be such that for any $\mathbf{x}\in D$, $T(Z)_{\mathbf{x}} \in \mathcal{L}(Z)$.
Then, there exists a unique operator $\mathcal{T}:\mathcal{H} \rightarrow \mathbb{R}^{D}$ satisfying
\begin{equation}
\label{eq:Tronde}
\operatorname{cov}( T(Z)_{\mathbf{x}}, Z_{\mathbf{x'}})=\mathcal{T}(k(.,\mathbf{x'})) (\mathbf{x})
\ \ \ \ (\mathbf{x},\mathbf{x'}\in D)
\end{equation}
and such that $ \mathcal{T}(h_n)(\mathbf{x}) \longrightarrow \mathcal{T}(h)(\mathbf{x})$ for all $\mathbf{x} \in D$  and $h_n \stackrel{\mathcal{H}}{\longrightarrow} h$.
\end{proposition}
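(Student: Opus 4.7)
The plan is to build $\mathcal{T}$ pointwise in $\mathbf{x}$ using the Loève isometry $\Psi:\mathcal{H}\to\mathcal{L}(Z)$ of Proposition \ref{loeve}. Since $T(Z)_\mathbf{x}\in\mathcal{L}(Z)$ for every $\mathbf{x}\in D$ by hypothesis, we may set $\xi_\mathbf{x}:=\Psi^{-1}(T(Z)_\mathbf{x})\in\mathcal{H}$, which is well-defined and unique because $\Psi$ is an isometric isomorphism. Then I would define
\begin{equation*}
\mathcal{T}(h)(\mathbf{x}) := \langle h,\xi_\mathbf{x}\rangle_{\mathcal{H}} \qquad (h\in\mathcal{H},\ \mathbf{x}\in D).
\end{equation*}
This is manifestly linear in $h$ and maps $\mathcal{H}$ into $\mathbb{R}^D$.

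Next I would verify Eq.~\ref{eq:Tronde}. Taking $h=k(\cdot,\mathbf{x}')$ and using the isometry $\langle\cdot,\cdot\rangle_{\mathcal{H}}=\langle\Psi(\cdot),\Psi(\cdot)\rangle_{L^2}$, I get
\begin{equation*}
\mathcal{T}(k(\cdot,\mathbf{x}'))(\mathbf{x}) = \langle k(\cdot,\mathbf{x}'),\xi_\mathbf{x}\rangle_{\mathcal{H}} = \langle Z_{\mathbf{x}'}, T(Z)_\mathbf{x}\rangle_{L^2} = \operatorname{cov}(T(Z)_\mathbf{x}, Z_{\mathbf{x}'}),
\end{equation*}
as required (using $\Psi(k(\cdot,\mathbf{x}'))=Z_{\mathbf{x}'}$ and the fact that both $T(Z)_\mathbf{x}$ and $Z_{\mathbf{x}'}$ are centred).

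The required continuity is immediate: if $h_n\to h$ in $\mathcal{H}$, then by Cauchy--Schwarz $|\mathcal{T}(h_n)(\mathbf{x})-\mathcal{T}(h)(\mathbf{x})|=|\langle h_n-h,\xi_\mathbf{x}\rangle_{\mathcal{H}}|\le\|h_n-h\|_{\mathcal{H}}\|\xi_\mathbf{x}\|_{\mathcal{H}}\to 0$, giving pointwise convergence for each $\mathbf{x}\in D$.

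For uniqueness, suppose $\mathcal{T}_1,\mathcal{T}_2$ both satisfy Eq.~\ref{eq:Tronde} together with the continuity requirement. They agree on every $k(\cdot,\mathbf{x}')$, hence on $\mathrm{span}\{k(\cdot,\mathbf{x}'):\mathbf{x}'\in D\}$ by linearity; since this span is dense in $\mathcal{H}$ by the definition of the RKHS, any $h\in\mathcal{H}$ is an $\mathcal{H}$-limit of a sequence $h_n$ in the span, and the imposed pointwise continuity forces $\mathcal{T}_1(h)(\mathbf{x})=\mathcal{T}_2(h)(\mathbf{x})$ for every $\mathbf{x}$. I do not foresee a substantial obstacle: once one recognises that the Loève isometry lets one identify $T(Z)_\mathbf{x}$ with an element of $\mathcal{H}$, the whole statement reduces to the Riesz-type representation built into $\mathcal{H}$; the only point requiring a little care is to keep the roles of $\mathbf{x}$ (the variable of evaluation, living on the path side) and $\mathbf{x}'$ (the argument of the kernel section, living on the RKHS side) straight.
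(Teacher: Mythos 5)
Your construction is correct and is essentially the paper's own argument: by the Loève isometry, $\langle h,\Psi^{-1}(T(Z)_\mathbf{x})\rangle_{\mathcal{H}}=\operatorname{cov}(T(Z)_\mathbf{x},\Psi(h))$, which is exactly the formula the paper uses to define $\mathcal{T}$, and your uniqueness step (density of the kernel sections plus the imposed pointwise continuity) matches theirs. The only cosmetic difference is that you phrase the definition via the representer $\xi_\mathbf{x}=\Psi^{-1}(T(Z)_\mathbf{x})$ and prove existence before uniqueness, whereas the paper writes the covariance formula directly and argues uniqueness first.
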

\begin{proof} 
Let $\mathcal{T}:\mathcal{H} \rightarrow \mathbb{R}^{D}$ be an operator satisfying (\ref{eq:Tronde}) and the pointwise convergence condition. Since $Z_{\mathbf{x'}} = \Psi (k(., \mathbf{x'}))$, we have:
$$
\mathcal{T}(k(.,\mathbf{x'})) (\mathbf{x}) = \operatorname{cov}( T(Z)_{\mathbf{x}}, \Psi(k(., \mathbf{x'})))
\ \ \ \ (\mathbf{x}, \mathbf{x'} \in D)
$$
This is immediately extended in a unique way to $\mathcal{H}$ by linearity and continuity of the isometry $\Psi$, leading to:
\begin{equation}
\label{eq:Tronde_H}
\mathcal{T}(h) (\mathbf{x}) = \operatorname{cov}( T(Z)_{\mathbf{x}}, \Psi(h) ) 
\ \ \ \
(\mathbf{x} \in D, h \in \mathcal{H}). 
\end{equation}
Conversely, using again properties of $\Psi$, one easily checks that (\ref{eq:Tronde_H}) defines a linear map satisfying (\ref{eq:Tronde}) and the pointwise convergence condition.
\end{proof} 

The construction proposed above will serve as basis for an invariance result, given in Prop.~\ref{prop:invRKHS}. 
Before stating it, let us examine and discuss in more detail the assumptions made in Prop.~\ref{prop:Tronde} and the relation between $T$ and $\mathcal{T}$, both through examples and analytical considerations. 

\begin{example}
\label{ex:TrondeOpLinCoCoop}
Let $T$ be a linear combination of composition operators, $T = \sum_{i=1}^q \alpha_{i} T_{v_i}$, such as introduced in Def.~\ref{def:comofcom}.
Recall that $T$ similarly applies to random field paths or to kernel functions, with
$T(Z)_{\cdot}= \sum_{i=1}^q \alpha_{i} Z_{v_i(\cdot)}$ and $T(k(\cdot,\mathbf{x'})) = \sum_{i=1}^q \alpha_{i} k(v_i(\cdot), \mathbf{x'})$. In particular, we directly obtain that the condition $T(Z)_{\mathbf{x}} \in \mathcal{L}(Z)$ 
is fulfilled, so that Prop.~\ref{prop:Tronde} can be applied.

It is then easy to check that $\mathcal{T}(k(\cdot,\mathbf{x'})) = T(k(\cdot, \mathbf{x'}))$, 
implying that $\mathcal{T}$ is the unique representer of $T$ on $\mathcal{H}$ satisfying the pointwise convergence condition of Prop.~\ref{prop:Tronde}. In other words, here 
$\mathcal{T}=T_{\vert \mathcal{H}}$. 

Note that this example also illustrates that 
$\mathcal{T}(\mathcal{H})$ may differ from $\mathcal{H}$. 
Indeed, choosing a composition operator $T = T_{v}$ and fixing $\mathbf{x'} \in D$, we have
$$\mathcal{T}(k(.,\mathbf{x'}))(\mathbf{x}) = cov(Z_{v}(\mathbf{x}), Z_{\mathbf{x'}})=k(v(\mathbf{x}), \mathbf{x'}),$$
and so $\mathcal{T}(k(.,\mathbf{x'}))=k(v(.), \mathbf{x'})$. 
Taking for instance the $1$-dimensional RKHS $\mathcal{H}$  spanned by the 1st order polynomial $e_1(t)=t$ on $D=[0,1]$ (with kernel $k(t,t') = e_1(t)e_1(t') = tt'$) and choosing $v(t)=t^2$, we see that  
$\mathcal{T}(k(.,t')): t \rightarrow t^2 t'$ is a second order polynomial, and thus not in $\mathcal{H}$.
\end{example}

\begin{example}
\label{ex:TrondeOpInt}
Let us now consider a measure $\nu$ on $D$ 
such that 
$$\int_D \sqrt{k(\mathbf{u},\mathbf{u})} 
\mathrm{d}\nu(\mathbf{u}) < + \infty, $$
\noindent
and define $T(Z)_{\mathbf{x}}= \int_{D} Z_\mathbf{u} d\nu(\mathbf{u})$ for all $\mathbf{x} \in D$.
Then, relying on the Fubini-Tonelli theorem,  
$\mathcal{T}(h) = \int_D h(\mathbf{u}) d \nu(\mathbf{u})$. In other words, $\mathcal{T}=T_{\vert \mathcal{H}}$ again. 
\end{example}

\subsection{A detour through the Karhunen-Lo\`eve expansion}

In both Examples \ref{ex:TrondeOpLinCoCoop} 
and \ref{ex:TrondeOpInt}, we found out that $\mathcal{T}=T_{\vert \mathcal{H}}$. From this, we may wonder under which circumstances $\mathcal{T}$ is a restriction of $T$. 
The spectral framework, and more specifically the \textit{Karhunen-Lo\`eve (KL) expansion}, is a suitable setting 
to investigate such question. As a preliminary to a sufficient condition for $\mathcal{T}=T_{\vert \mathcal{H}}$ to hold,  
let us recall some useful basics concerning the KL expansion.

\medskip

In a nutshell, the starting point of KL is the \textit{Mercer decomposition} (See \cite{mer1909}, with generalizations in \cite{koenig86,ste:sco2012}) 
of the continuous covariance kernel $k$:  
Given any finite measure $\nu$ on the Borel algebra of $D$ whose support is $D$ (typically the Lebesgue measure $\lambda$), there exists an orthonormal basis 
$(\varphi_n)_{n \geq 1}$ of $L^2(\nu)$
and a sequence of non-negative real numbers 
$(\gamma_n)_{n \geq 1} \downarrow 0$ such that:
\begin{equation}
\label{eq:KL_kernel}
k(\mathbf{x},\mathbf{x'}) = \sum_{n=1}^{+\infty} \gamma_{n} \varphi_n(\mathbf{x}) \varphi_n(\mathbf{x'}),
\end{equation}
\noindent
where the convergence is absolute and uniform on $D$. 
Note that the finite trace hypothesis $\int_{D} k(\mathbf{u}, \mathbf{u}) \mathrm{d}\nu(\mathbf{u}) < + \infty $ often given as prerequisite of the Mercer theorem is automatically fulfilled here, 
considering the assumptions made on $k$. 

Relying on Eq.~\ref{eq:KL_kernel}, it is well-known (See, e.g., \cite{ber:tho04}) that the RKHS  $\mathcal{H}$ can then be represented as a subspace of $L^2(\nu)$, in the following way:
\begin{equation}
\label{eq:H_with_KL}
\mathcal{H} = \left\{ \mathbf{x} \rightarrow f(\mathbf{x}) = \sum_{n=1}^{+\infty} \alpha_n \varphi_n(\mathbf{x}), 
\quad s.t. \quad \sum_{n=1}^{+\infty} \frac{\alpha_n^2}{\gamma_{n}} < +\infty \right\}. 
\end{equation}
Furthermore, relying on the Lo\`eve isometry (Cf. Prop.~\ref{loeve}), the random field $Z$ itself can be expanded with respect to the $\varphi_n$'s, leading to the KL expansion: 
\begin{equation}
\label{eq:KL_process}
Z_\mathbf{x} = \sum_{n=1}^{+\infty} \sqrt{\gamma_{n}} \zeta_n \varphi_n(\mathbf{x})
\end{equation}
where the $\zeta_n$'s are independent standard Gaussian random variables, and the series is uniformly convergent  with probability $1$ \cite{kue71}.
In particular, noting that 
$\mathbb{E}\left[ \int_{D} 
Z_{\mathbf{u}}^2 \mathrm{d}\nu (\mathbf{u})
\right] 
= \int_{D} 
k(\mathbf{u},\mathbf{u}) \mathrm{d}\nu (\mathbf{u}) 
< + \infty$, we get (with probability $1$) both that the paths of $Z$ are in $L^2(\nu)$ and that the series of Eq.~\ref{eq:KL_process} converges normally. Consequently, in case of a bounded operator $T$ from $L^2(\nu)$ to itself, 
\begin{equation}
\label{eq:KL_image}
T(Z)_{\cdot} = \sum_{n=1}^{+\infty} \sqrt{\gamma_{n}} \zeta_n T(\varphi_n)(\cdot)
\end{equation}
with probability $1$, where the convergence is normal. 
%
Note that in cases such as the one of the differentiation operator (See, e.g., \nocite{kad1967} for a differentiation of the KL expansion), $T$ is not bounded with respect to the usual $L^2(\nu)$ norm, but similar normal convergence results may be obtained by considering a source space of differentiable elements equipped with an \textit{ad hoc} topology (e.g., Sobolev spaces). 

\medskip

Concerning our question on operators on paths vs on the RKHS, we obtain by substituting $Z$ and $T(Z)$ by their respective expansions in $\mathcal{T}$'s definition:  
\begin{equation}
\begin{split}
\mathcal{T}(k(\cdot, \mathbf{x'})) &= \operatorname{cov}(T(Z)_{\cdot}, Z_\mathbf{x'}) \\ 
&= \operatorname{cov} \left(\sum_{n=1}^{+\infty} \sqrt{\gamma_{n}} \zeta_n T(\varphi_n)(\cdot),
\sum_{m=1}^{+\infty} \sqrt{\gamma_m} \zeta_m \varphi_m (\mathbf{x'}) \right) \\
&= \sum_{n=1}^{+\infty} \gamma_{n} \varphi_n(\mathbf{x'}) T(\varphi_n)(\cdot).
\end{split}
\end{equation}
Now, using the Mercer decomposition (\ref{eq:KL_kernel})
of $k$ and the boundedness of $T$, 
$T(k(\cdot, \mathbf{x'})) =  \sum_{n=1}^{+\infty} \gamma_{n} \varphi_n(\mathbf{x'}) T(\varphi_n)(\cdot)$, so we conclude that $\mathcal{T}=T_{\vert \mathcal{H}}$. 
Besides, on may also notice that $\zeta_n \in \mathcal{L}(Z)$ since 
$\zeta_n = \frac{1}{\sqrt{\gamma_{n}}} \langle Z_{\cdot}, \varphi_n \rangle_{L^2(\nu)}.$
Using that $\mathbb{E}[||T(Z)_{\cdot}||_{L^2(\nu)}^2] < + \infty$, we finally also get that $T(Z)_\mathbf{x}\in \mathcal{L}(Z)$ $\nu$-a.e. 

\subsection{Invariances and Gaussian random fields}
\label{subsec:RKHS}

Coming back to invariances, we now give a characterization result, that generalizes those of Section $2$ in the particular case of Gaussian fields:

\begin{proposition}
\label{prop:invRKHS}
Under the assumptions of Prop.~\ref{prop:Tronde},
the three following conditions are equivalent:
\begin{itemize}
\item[(i)] $Z=T(Z)$ \textit{up to a modification} 
\item[(ii)] $\mathcal{T}(k(\cdot, \mathbf{x'}))=k(\cdot, \mathbf{x'}) \ (\forall \mathbf{x'} \in D) $
\item[(iii)] $\mathcal{T}=\text{Id}_{\mathcal{H}}$
\end{itemize}
\end{proposition}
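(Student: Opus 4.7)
The plan is to establish the chain (i)$\Leftrightarrow$(ii)$\Leftrightarrow$(iii), noting that two of the four arrows come essentially for free. The implication (iii)$\Rightarrow$(ii) is immediate by evaluating the identity at $k(\cdot,\mathbf{x'})\in\mathcal{H}$. For (i)$\Rightarrow$(ii), if $Z$ and $T(Z)$ are modifications of each other, then $\operatorname{cov}(T(Z)_\mathbf{x}, Z_\mathbf{x'})=\operatorname{cov}(Z_\mathbf{x},Z_\mathbf{x'})=k(\mathbf{x},\mathbf{x'})$, and the defining relation (\ref{eq:Tronde}) of $\mathcal{T}$ yields $\mathcal{T}(k(\cdot,\mathbf{x'}))(\mathbf{x})=k(\mathbf{x},\mathbf{x'})$ pointwise in $\mathbf{x}$, i.e.\ $\mathcal{T}(k(\cdot,\mathbf{x'}))=k(\cdot,\mathbf{x'})$.

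For (ii)$\Rightarrow$(i), the natural route is to bound $\operatorname{var}(Z_\mathbf{x}-T(Z)_\mathbf{x})=\operatorname{var}(Z_\mathbf{x})-2\operatorname{cov}(T(Z)_\mathbf{x},Z_\mathbf{x})+\operatorname{var}(T(Z)_\mathbf{x})$ and show that all three terms equal $k(\mathbf{x},\mathbf{x})$. The covariance term is handled directly by (ii) together with (\ref{eq:Tronde}). The only delicate term is $\operatorname{var}(T(Z)_\mathbf{x})$; I would control it by invoking the Lo\`eve isometry $\Psi$: since $T(Z)_\mathbf{x}\in\mathcal{L}(Z)$ there exists a unique $h_\mathbf{x}\in\mathcal{H}$ with $T(Z)_\mathbf{x}=\Psi(h_\mathbf{x})$. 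Combining the isometry with the reproducing property gives $h_\mathbf{x}(\mathbf{x'})=\langle h_\mathbf{x},k(\cdot,\mathbf{x'})\rangle_{\mathcal{H}}=\operatorname{cov}(\Psi(h_\mathbf{x}),Z_\mathbf{x'})=\mathcal{T}(k(\cdot,\mathbf{x'}))(\mathbf{x})$, which by (ii) equals $k(\mathbf{x},\mathbf{x'})$. Hence $h_\mathbf{x}=k(\cdot,\mathbf{x})$ as elements of $\mathcal{H}$ and $\operatorname{var}(T(Z)_\mathbf{x})=\|h_\mathbf{x}\|_{\mathcal{H}}^2=k(\mathbf{x},\mathbf{x})$. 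The variance collapses to zero, and since both fields are centred this means $\mathbb{P}(Z_\mathbf{x}=T(Z)_\mathbf{x})=1$ for every $\mathbf{x}\in D$.

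For (ii)$\Rightarrow$(iii), the plan is to extend the pointwise identity from the spanning family $\{k(\cdot,\mathbf{x'}):\mathbf{x'}\in D\}$ to all of $\mathcal{H}$ by density and continuity. Any $h\in\mathcal{H}$ is the $\mathcal{H}$-limit of a sequence $h_n\in\operatorname{span}(k(\cdot,\mathbf{x'}))$; by linearity and (ii), $\mathcal{T}(h_n)=h_n$ pointwise on $D$. The reproducing inequality $|h_n(\mathbf{x})-h(\mathbf{x})|\le \|h_n-h\|_{\mathcal{H}}\sqrt{k(\mathbf{x},\mathbf{x})}$ gives pointwise convergence $h_n(\mathbf{x})\to h(\mathbf{x})$, while the pointwise convergence property of $\mathcal{T}$ built into Prop.~\ref{prop:Tronde} gives $\mathcal{T}(h_n)(\mathbf{x})\to\mathcal{T}(h)(\mathbf{x})$; equating the two limits yields $\mathcal{T}(h)=h$.

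I expect the main technical obstacle to be the variance identification in (ii)$\Rightarrow$(i), where one has to identify $h_\mathbf{x}\in\mathcal{H}$ with $k(\cdot,\mathbf{x})$ through the isometry rather than simply reading the variance off of $T$: without Gaussianity (and the attendant structure of $\mathcal{L}(Z)$ and $\Psi$), there would be no direct way to evaluate $\operatorname{var}(T(Z)_\mathbf{x})$ purely from the kernel data, which is precisely what made the general (non-Gaussian) situation after Eq.~(\ref{eq_toto}) unresolvable.
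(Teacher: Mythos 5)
Your proof is correct and rests on the same essential ingredients as the paper's (the hypothesis $T(Z)_{\mathbf{x}}\in\mathcal{L}(Z)$ and the Lo\`eve isometry), but you organise the equivalence (i)$\Leftrightarrow$(ii) differently. The paper handles it in one stroke: since $Z_{\mathbf{x}}-T(Z)_{\mathbf{x}}\in\mathcal{L}(Z)$ and $\mathcal{L}(Z)$ is the closed span of the $Z_{\mathbf{x'}}$'s, this element vanishes almost surely if and only if $\operatorname{cov}(Z_{\mathbf{x}}-T(Z)_{\mathbf{x}},Z_{\mathbf{x'}})=0$ for all $\mathbf{x'}\in D$, which by the defining relation (\ref{eq:Tronde}) is exactly $k(\mathbf{x},\mathbf{x'})=\mathcal{T}(k(\cdot,\mathbf{x'}))(\mathbf{x})$ -- no variance computation is needed. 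You instead prove (i)$\Rightarrow$(ii) by direct substitution into (\ref{eq:Tronde}) and (ii)$\Rightarrow$(i) by expanding $\operatorname{var}(Z_{\mathbf{x}}-T(Z)_{\mathbf{x}})$, where the delicate term $\operatorname{var}(T(Z)_{\mathbf{x}})$ is evaluated by identifying the $\mathcal{H}$-representer $h_{\mathbf{x}}$ of $T(Z)_{\mathbf{x}}$ with $k(\cdot,\mathbf{x})$ (valid, since RKHS elements agreeing pointwise on $D$ coincide in $\mathcal{H}$); in fact, once you know $h_{\mathbf{x}}=k(\cdot,\mathbf{x})$ you could conclude immediately, because then $T(Z)_{\mathbf{x}}=\Psi(k(\cdot,\mathbf{x}))=Z_{\mathbf{x}}$ in $L^2(\mathbb{P})$, making the variance expansion redundant. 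Your (ii)$\Leftrightarrow$(iii) density argument is precisely what the paper compresses into the one-line remark that the pointwise convergence condition on $\mathcal{T}$ gives the equivalence, and your use of linearity of $\mathcal{T}$ there is legitimate since $\mathcal{T}(h)(\mathbf{x})=\operatorname{cov}(T(Z)_{\mathbf{x}},\Psi(h))$ is linear in $h$. In short: the paper's duality argument buys brevity and mirrors the reasoning around Eq.~(\ref{eq_toto}), while your route makes explicit how the invariance hypothesis pins down the $\mathcal{H}$-preimage of $T(Z)_{\mathbf{x}}$; both are rigorous.
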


\begin{proof} By the pointwise convergence condition on $\mathcal{T}$, (ii) and (iii) are equivalent.
Now, let us prove the equivalence between (i) and (ii). 
Since for any arbitrary $\mathbf{x} \in D$, $Z_{\mathbf{x}}-T(Z)_{\mathbf{x} } \in \mathcal{L}(Z)$, 
we have 
by duality:
\begin{align*}
Z_{\mathbf{x}} \stackrel{a.s.}{=} T(Z)_{\mathbf{x}} \Longleftrightarrow & \ Z_{\mathbf{x}} - T(Z)_{\mathbf{x}} \stackrel{a.s.}{=} 0\\
\Longleftrightarrow & \ \text{cov}( Z_{\mathbf{x}} - T(Z)_{\mathbf{x}}, Z_{\mathbf{x'}})= 0 \ \forall \mathbf{x'} \in D \\
\Longleftrightarrow & \ k(\mathbf{x}, \mathbf{x'}) = \mathcal{T}( k(\cdot, \mathbf{x'}) ) (\mathbf{x}) \ \forall \mathbf{x'} \in D 
\end{align*}
\end{proof} 

Proposition \ref{prop:invRKHS} can be used to define families of centred Gaussian field models satisfying linear-type properties, simply by looking at their kernel. This includes for instance the case of Gaussian random fields with centred paths (or \textit{mean-centered} fields, to use the terminology of \cite{deh2007}) 
and Gaussian random fields whose paths are solutions of linear differential equations, as illustrated below (See also \cite{sch:sch2012} for recent results on vector fields with divergence-free and curl-free paths).

\noindent
\begin{example}[Gaussian random fields with centred paths]
Let $\nu$ be a probability measure on $D \subset \mathbb{R}^d$
such that $\int_D \sqrt{k(\mathbf{u},\mathbf{u})} d\nu(\mathbf{u}) < + \infty$.
Then $Z$ has centered paths -- i.e. $\int_D Z_\mathbf{u} \mathrm{d} \nu(\mathbf{u}) = 0$ --
 if and only if $\int_D k(\mathbf{x},\mathbf{u}) \mathrm{d} \nu(\mathbf{u}) = 0, \forall \mathbf{x} \in D$. Indeed, define $T$ by $T(Z)_\mathbf{x} = Z_{\mathbf{x}} - \int_D Z_{\mathbf{u}} \mathrm{d} \nu(\mathbf{u})$ for all $\mathbf{x} \in D$.
Following Example \ref{ex:TrondeOpInt}, we have $\mathcal{T}(h) = h - \int_D h(\mathbf{u}) \mathrm{d} \nu(\mathbf{u})$, and the result comes from Proposition \ref{prop:invRKHS}. 

For instance, the kernel $k_{0}$ defined by 
\begin{equation}
\begin{split}
k_0(\mathbf{x},\mathbf{y}) = 
k(\mathbf{x},\mathbf{y}) & - \int k(\mathbf{x},\mathbf{u}) \mathrm{d} \nu(\mathbf{u}) \\ 
& - \int k(\mathbf{y},\mathbf{u}) \mathrm{d} \nu(\mathbf{u}) + \int k(\mathbf{u},\mathbf{v})  \mathrm{d} \nu(\mathbf{u})  \mathrm{d} \nu(\mathbf{v}) 
\label{eq:k0}
\end{split}
\end{equation}
satisfies the above condition. Figure~\ref{fig:ex_invpath} (a) shows some sample paths of a centred Gaussian random process based possessing a kernel of that form.
\end{example}

\begin{example}
We illustrate here the case where the sample paths of a Gaussian process are solution to the differential equation:
\begin{equation}
y'' (t) + y(t) = 2t.
\label{eq:DE}
\end{equation}
The solutions of the associated homogeneous equation are the functions satisfying $y = - y''$ so they correspond to invariant functions with respect to $T:\ f \rightarrow - f''$. The solutions of the homogeneous equation are well-known to be in $\mathrm{span}(\cos, \sin)$ which can be endowed with the following kernel
\begin{equation}
k_{ode}(s,t)= (\cos(s)\ \sin(t)) \; \Sigma \: (\cos(s)\ \sin(t) )^{t} 
\label{eq:ODEkern}
\end{equation} 
where $\Sigma$ is a symmetric positive semi-definite $2 \times 2$ matrix. $k_{ode}(s,\cdot)$ is solution to the homogeneous equation (i.e. $k_{ode}(s,\cdot)$ is $T$-invariant) for all $s$ so the sample paths of a centred Gaussian process with such kernel inherit this property. 
Let $Y$ be a Gaussian process with mean $\mu(t) = 2t$ and covariance $k$. Since $\mu$ is a particular solution of Eq.~\ref{eq:DE}, $Y$ has sample paths satisfying this differential equation. This is illustrated in Figure~\ref{fig:ex_invpath}.b.
\end{example}

\begin{figure}
\centering
\begin{subfigure}[b]{0.3\textwidth}
    \centering
    \includegraphics[width=\textwidth]{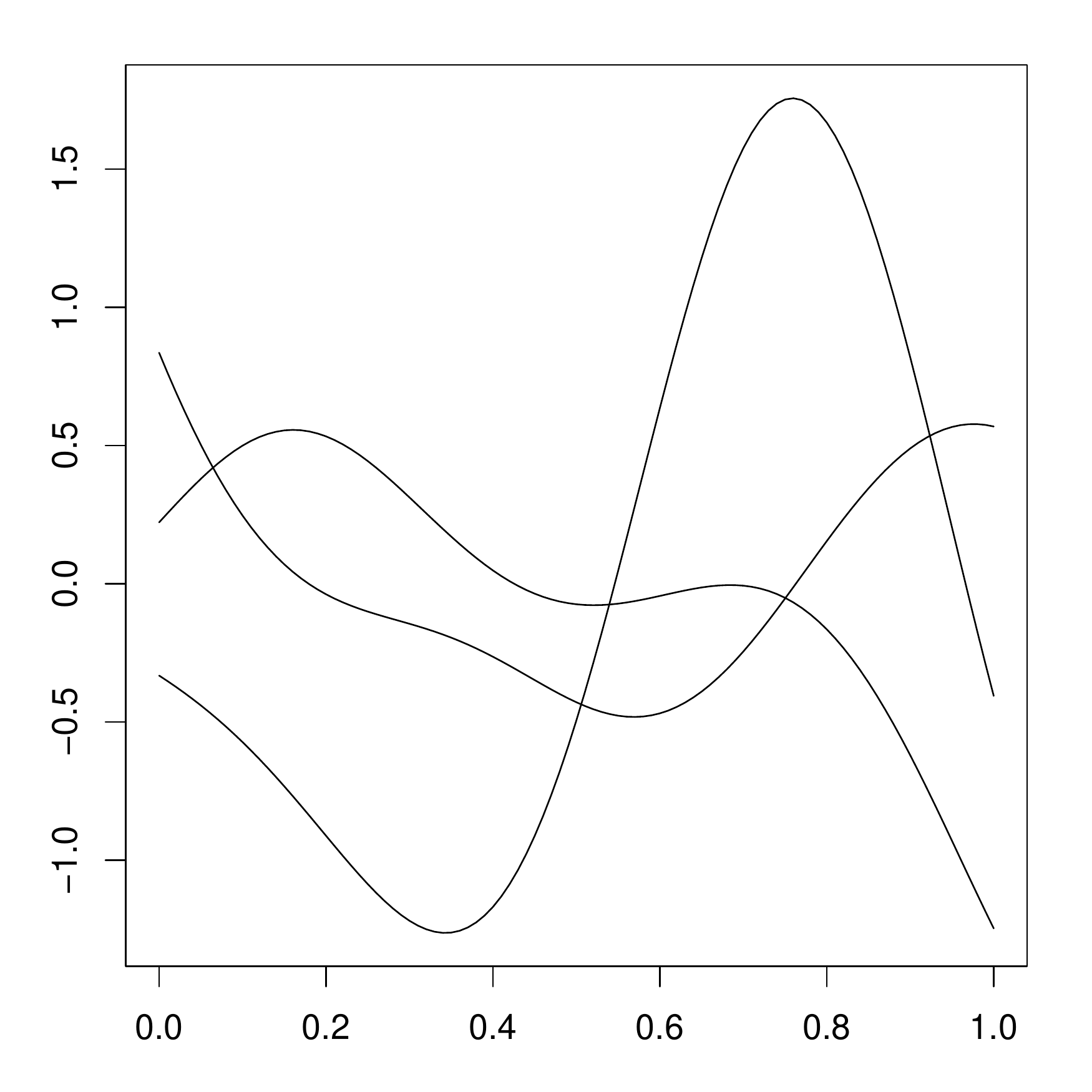}
    \caption{Centred sample paths, based on $k_0$. \quad \quad \quad}
\end{subfigure} \quad
\begin{subfigure}[b]{0.3\textwidth}
    \centering
    \includegraphics[width=\textwidth]{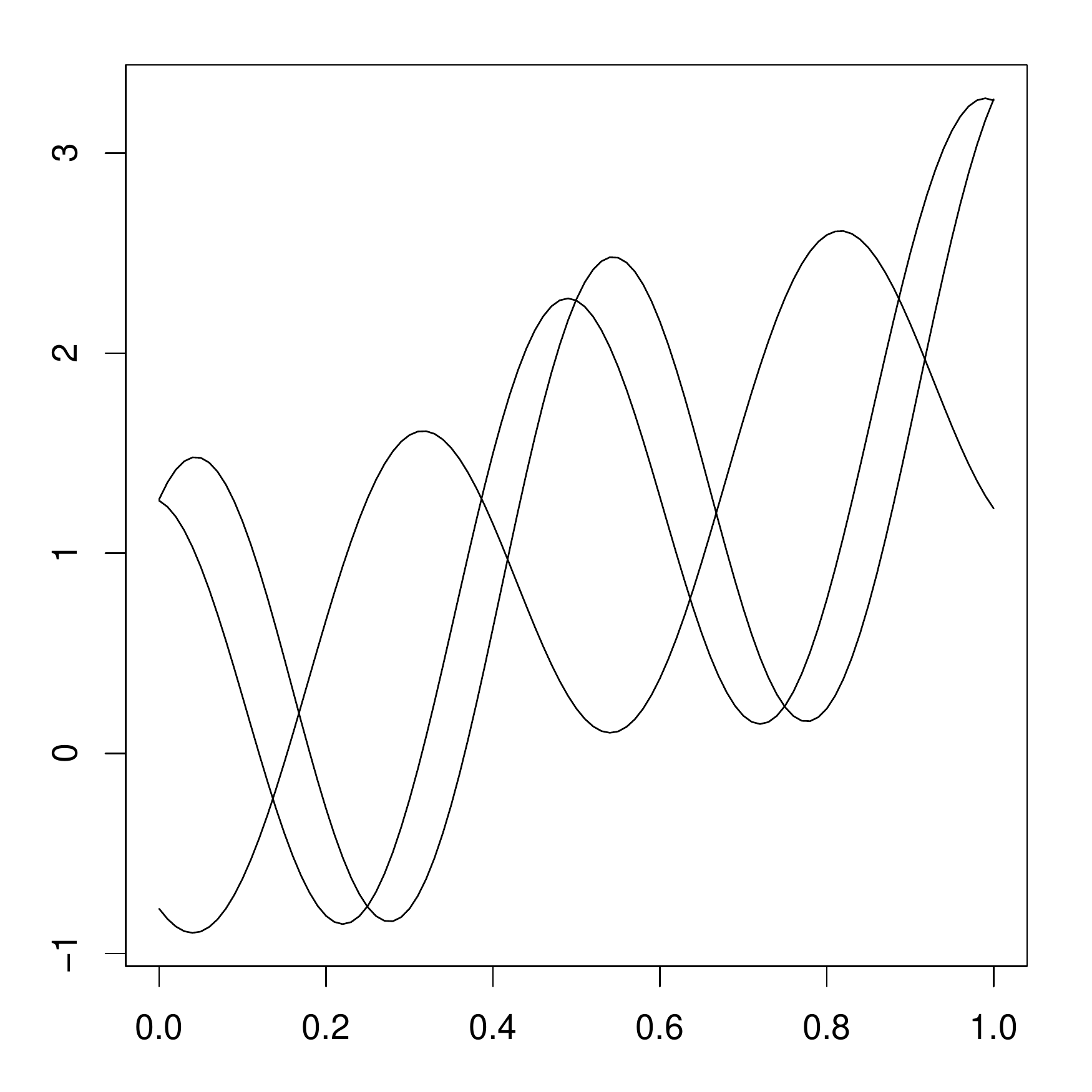}
    \caption{Random solutions to Eq.~\ref{eq:DE}, based on $k_{ode}$.}
\end{subfigure} \quad
\begin{subfigure}[b]{0.3\textwidth}
    \centering
    \includegraphics[width=\textwidth]{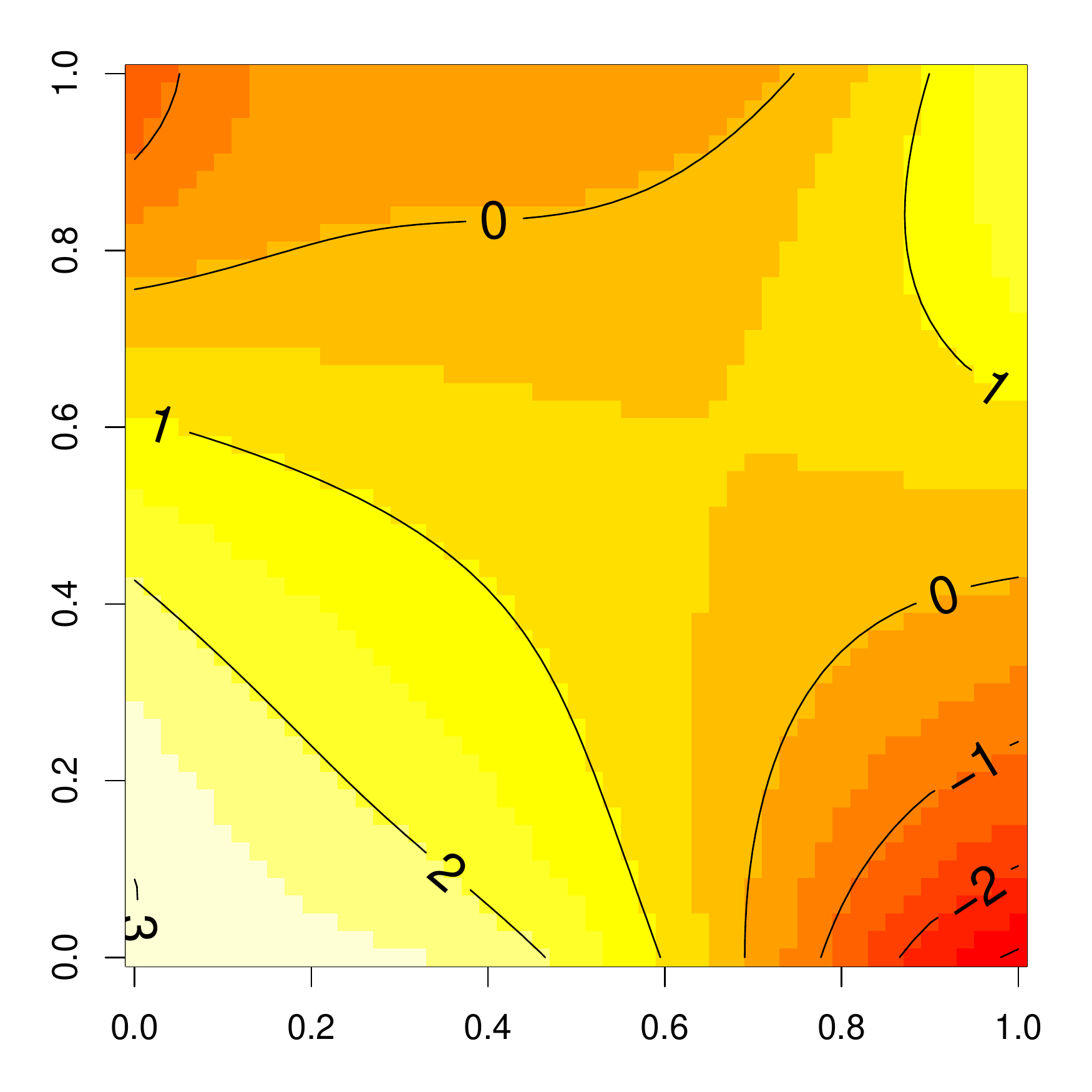}
    \caption{Harmonic sample path, based on $k_{harm}$.}
\end{subfigure}%
\caption{Examples of sample paths invariant under various operators.}
\label{fig:ex_invpath}
\end{figure}

\begin{example}
In the previous example, the solutions of the ODE belong to a 2-dimensional space. We consider here another ODE, the \textit{Laplace equation} $\Delta f = 0$, for which the space of solutions is infinite dimensional. 
The solutions to this equation are called harmonic functions and we will call harmonic kernels any positive definite function satisfying the ODE argumentwise: $(\Delta k(\cdot,\mathbf{x'})) = 0$ 
$(\mathbf{x'} \in D)$. 
Examples of such harmonic kernels can be found in the recent literature (See respectively~\cite{schaback2009solving,hon2013solving} for 2D and 3D input spaces). We will focus here on the following kernel over $\mathbb{R}^2 \times \mathbb{R}^2$:
\begin{equation}
	k_{harm}(\mathbf{x},\mathbf{y}) = \exp \left( \frac{x_1 y_1 + x_2 y_2}{\theta^2} \right) \ \cos \left( \frac{x_2 y_1 - x_1 y_2}{\theta^2} \right).
    \label{eq:kharm}
\end{equation}
Proposition~\ref{prop:invRKHS} can be applied to the operator $f \rightarrow f + \Delta f$ so the sample paths obtained with $k_{harm}$ also are harmomic functions. This can be seen in the right panel of Figure~\ref{fig:ex_invpath} where the sample path shows some special features of harmonic functions such as the absence of local minimum.

\end{example}

\section{Applications in Gaussian process regression}
\label{sec:gpr}

The aim of this section is to discuss and illustrate the use of argumentwise invariant kernels in Gaussian process regression (GPR).
The main idea behind this approach is to incorporate invariance assumptions within GPR. 
As we will see, using such kernels can significantly improve the predictivity of GPR in cases where \textit{structural priors} on the function to approximate, involving invariances under bounded linear operators, are available.  

\medskip

GPR gives a very convenient stochastic framework for modelling a function $f: D \rightarrow \mathbb{R}$ based on a finite set of observations $y_i = f(\mathbf{x}^{(i)})$, $1 \leq i \leq n$ and a Gaussian process prior on $f$. 
The literature of GPR and related methods is scattered over several fields including statistics and geostatistics~\cite{ste99}, machine learning~\cite{ras:wil06} and functional analysis~\cite{ber:tho04}. 
Depending on the scientific community, the predictor of $f$ is either defined as best linear unbiased predictor of a square-integrable (or intrinsic) random field, conditional expectation of a Gaussian Process, or interpolator with minimal norm in RKHS settings. 
One striking fact is that, given any positive definite kernel $k$, the approaches end up with the same expressions for the best predictor and for the ''conditional'' kernel describing the remaining uncertainty on $f$: 

\begin{equation}
\begin{split}
m(\mathbf{x}) &= \mathbf{k}(\mathbf{x})^t K^{-1} \mathbf{Y} \\
c(\mathbf{x},\mathbf{x}') &= k(\mathbf{x},\mathbf{x'}) - \mathbf{k}(\mathbf{x})^t K^{-1} \mathbf{k}(\mathbf{x}')
\end{split}
\label{eq:GPR}
\end{equation}
where $\mathbf{k}(\mathbf{x})= (k(\mathbf{x},\mathbf{x}^{(i)}))_{1 \leq i \leq n}$ and $K=(k(\mathbf{x}^{(i)},\mathbf{x}^{(j)}))_{1 \leq i,j \leq n}$. 
We will discuss in the next section the influence of using invariant kernel in such models.

\subsection{Gaussian Process Regression with invariant kernels}
We consider here a bounded operator $T$ on the paths as defined in Section~\ref{inv_Gaussian_case}, and we use for convenience the same letter to denote T's restriction to the RKHS $\mathcal{H}$.
Assuming that $k(\mathbf{x},.) = T(k(\mathbf{x},.)) \ (\mathbf{x}\in D)$, it was already established that the paths of a centred Gaussian random field with kernel $k$ are invariant under $T$. 
We now establish further that both the GPR predictor and the conditional distribution of such random field knowing response values at a finite set of points are invariant as well. 
\begin{proposition}
Let $Z$ be a centred Gaussian field with argumentwise 
$T$-invariant kernel $k$ and $y_i = Z(\mathbf{x}^{(i)})$ $(1 \leq i \leq n)$ be a finite set of observations. Then, 
\begin{itemize}
 \item[(i)] The GPR predictor $m$ is $T$-invariant
 \item[(ii)] The ''conditional covariance kernel'' $c$ is argumentwise $T$-invariant
 \item[(iii)] $Z$ conditioned on the evaluation results is $T$-invariant, up to a modification. 
 Consequently, 
conditional simulations of $Z$ are $T$-invariant.
 \end{itemize}
\end{proposition}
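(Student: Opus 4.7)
The plan is to exploit the common structural feature of all three statements: the GPR predictor $m(\cdot)$ and, for each fixed $\mathbf{x}'$, the function $\mathbf{x} \mapsto c(\mathbf{x}, \mathbf{x}')$ both lie in the linear span of functions of the form $k(\cdot, \mathbf{z})$ for $\mathbf{z} \in \{\mathbf{x}^{(1)}, \dots, \mathbf{x}^{(n)}, \mathbf{x}'\}$, each of which is $T$-invariant by the standing hypothesis. Since $T$ is linear and bounded on $\mathcal{H}$, it commutes with finite linear combinations, and everything follows.

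For (i), I would write $m(\mathbf{x}) = \sum_{i=1}^n \alpha_i \, k(\mathbf{x}, \mathbf{x}^{(i)})$ with $\alpha = K^{-1}\mathbf{Y}$, and then compute $T(m) = \sum_i \alpha_i \, T(k(\cdot, \mathbf{x}^{(i)})) = \sum_i \alpha_i \, k(\cdot, \mathbf{x}^{(i)}) = m$ using the assumption $T(k(\cdot, \mathbf{x}^{(i)})) = k(\cdot, \mathbf{x}^{(i)})$. For (ii), I would fix $\mathbf{x}' \in D$, set $\beta(\mathbf{x}') = K^{-1}\mathbf{k}(\mathbf{x}')$, and rewrite
\begin{equation*}
c(\cdot, \mathbf{x}') = k(\cdot, \mathbf{x}') - \sum_{i=1}^n \beta_i(\mathbf{x}') \, k(\cdot, \mathbf{x}^{(i)}),
\end{equation*}
so that applying $T$ termwise yields $T(c(\cdot, \mathbf{x}')) = c(\cdot, \mathbf{x}')$.

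For (iii), my plan is to use the classical representation of the conditional distribution: given $\mathbf{Y}$, the field $Z$ is Gaussian with mean function $m$ and covariance kernel $c$, hence can be written in distribution as $m + \widetilde{Z}$ with $\widetilde{Z}$ a centred Gaussian field of covariance $c$ (independent of $\mathbf{Y}$). By (ii), $c$ is argumentwise $T$-invariant, so Prop.~\ref{prop:invRKHS} applied to $\widetilde{Z}$ yields that $\widetilde{Z}$ has $T$-invariant paths up to modification; combined with the deterministic $T$-invariance of $m$ from (i), this gives the $T$-invariance of the conditional field (and of its simulations) up to modification.

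The main subtlety will be justifying the application of Prop.~\ref{prop:invRKHS} to $\widetilde{Z}$, which requires checking that the hypotheses of Prop.~\ref{prop:Tronde} transfer from $Z$ to $\widetilde{Z}$. This should follow from the fact that the RKHS attached to $c$ is the closed orthogonal complement in $\mathcal{H}$ of $\mathrm{span}(k(\cdot, \mathbf{x}^{(i)}),\, 1 \leq i \leq n)$ and that $T$ is bounded on $\mathcal{H}$, so $T(\widetilde{Z})_\mathbf{x} \in \mathcal{L}(\widetilde{Z})$ for each $\mathbf{x}$. As a sanity check and an alternative route, one may observe that $\mathbb{P}(T(Z)_\mathbf{x} = Z_\mathbf{x}) = 1$ implies $\mathbb{P}(T(Z)_\mathbf{x} = Z_\mathbf{x} \mid \mathbf{Y}) = 1$ almost surely, which gives (iii) without any additional structural argument; statements (i) and (ii) then simply describe the mean and covariance of this $T$-invariant conditional law.
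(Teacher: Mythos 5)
Your proof is correct and follows essentially the same route as the paper's: (i) and (ii) are obtained by linearity of $T$ combined with the argumentwise invariance of $k$, and (iii) by representing the conditional law as $m + \mathcal{GRF}(0,c)$ and applying Proposition~\ref{prop:invRKHS} to the centred field with kernel $c$. Your tower-property observation and the check that the hypotheses of Proposition~\ref{prop:Tronde} transfer to the conditioned field are refinements the paper leaves implicit, but they do not alter the argument.
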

\begin{proof}
The properties $(i)$ and $(ii)$ are a direct consequence of the linearity of $T$. For example, we have for $(ii)$:
\begin{equation}
\begin{split}
T(c(\mathbf{x},.))(\mathbf{x}') &= T(k(\mathbf{x},.) - \mathbf{k}(\mathbf{x})^t K^{-1} \mathbf{k}(.))(\mathbf{x}') \\
& = T(k(\mathbf{x},.))(\mathbf{x}') - \mathbf{k}(\mathbf{x})^t K^{-1} T(\mathbf{k}(.))(\mathbf{x}') \\
& = k(\mathbf{x},\mathbf{x'}) - \mathbf{k}(\mathbf{x})^t K^{-1} \mathbf{k}(\mathbf{x}').
\end{split}
\end{equation}
Furthermore the conditional distribution of $Z$ knowing evaluation results simplifies as $\mathcal{L}(Z | Z(\mathbf{X})=\mathbf{Y}) \equiv m + \mathcal{GRF}(0, c)$, 
where $\mathcal{GRF}(0, c)$ stands for the distribution of a centred Gaussian random field with covariance kernel $c$. 
%
According to Proposition~\ref{prop:invRKHS}, a random field with distribution $\mathcal{GRF}(0, c)$ is $T$-invariant up to a modification. 
$(iii)$ follows using the linearity of $T$. 
\end{proof} 

\subsection{Illustration on examples}
We now consider invariant kernels introduced in the examples of the previous sections and study associated GPR models and predictions. More precisely, we focus on $3$ case studies involving various priors: zero-mean functions, solutions to $y''(x) + y(x) = 2x$ and solutions to $ \Delta y (\mathbf{x})= 0$.

\paragraph{GPR with centred paths}
Here we assume that $D = [-\pi,\pi]$ and that the function to approximate is $f(x)= \cos(x) + \cos(2 x)+ \cos(3 x) + \sin(x/2)$.
Assuming that for some reason, it is known \textit{a priori} that the integral of $f$ over $D$ is zero, it is of particular interest to incorporate this knowledge into the model. 
To get an insight of how GPR is improved by incoporating this structural prior, 
we compare predictions based on the following kernels:
\begin{equation}
\begin{split}
k(t,t') &= \exp \left( -4 (t-t')^2 \right) \\
k_{inv}(t,t') &= k(t,t') - \int k(t,u) \mathrm{d}u - \int k(t',u) \mathrm{d}u + \iint k(u,v) \mathrm{d}u \mathrm{d}v.
\end{split}
\end{equation}
The integral of $k_{inv}$ with respect to any of its variables is zero, so the paths of the associated centred Gaussian Process will inherit this property.  
As a consequence, choosing a kernel such as $k_{inv}$ allows incorporating the prior information $\int f(u) \mathrm{d}u = 0 $ in GPR modeling, as illustrated in Figure~\ref{fig:zero-mean_GPR} (a).

\medskip  

In a second time, we assume that evaluation results $y_i=f(x^{(i)})$ at $12$ distinct points $x^{(i)} \in D$ are available, and we compare Gaussian Process conditional distributions based on both kernels. 
As seen in Figure~\ref{fig:zero-mean_GPR} the use of $k_{inv}$ improves considerably GPR predictions since $m$ recovers the large peak in the center of the domain. This is reflected by 
root integrated squared errors, with values of 0.04 and 1.06 for $k_{inv}$ and $k$, respectively.

\begin{figure}
\centering
\begin{subfigure}[b]{0.45\textwidth}
    \centering
    \includegraphics[width=\textwidth]{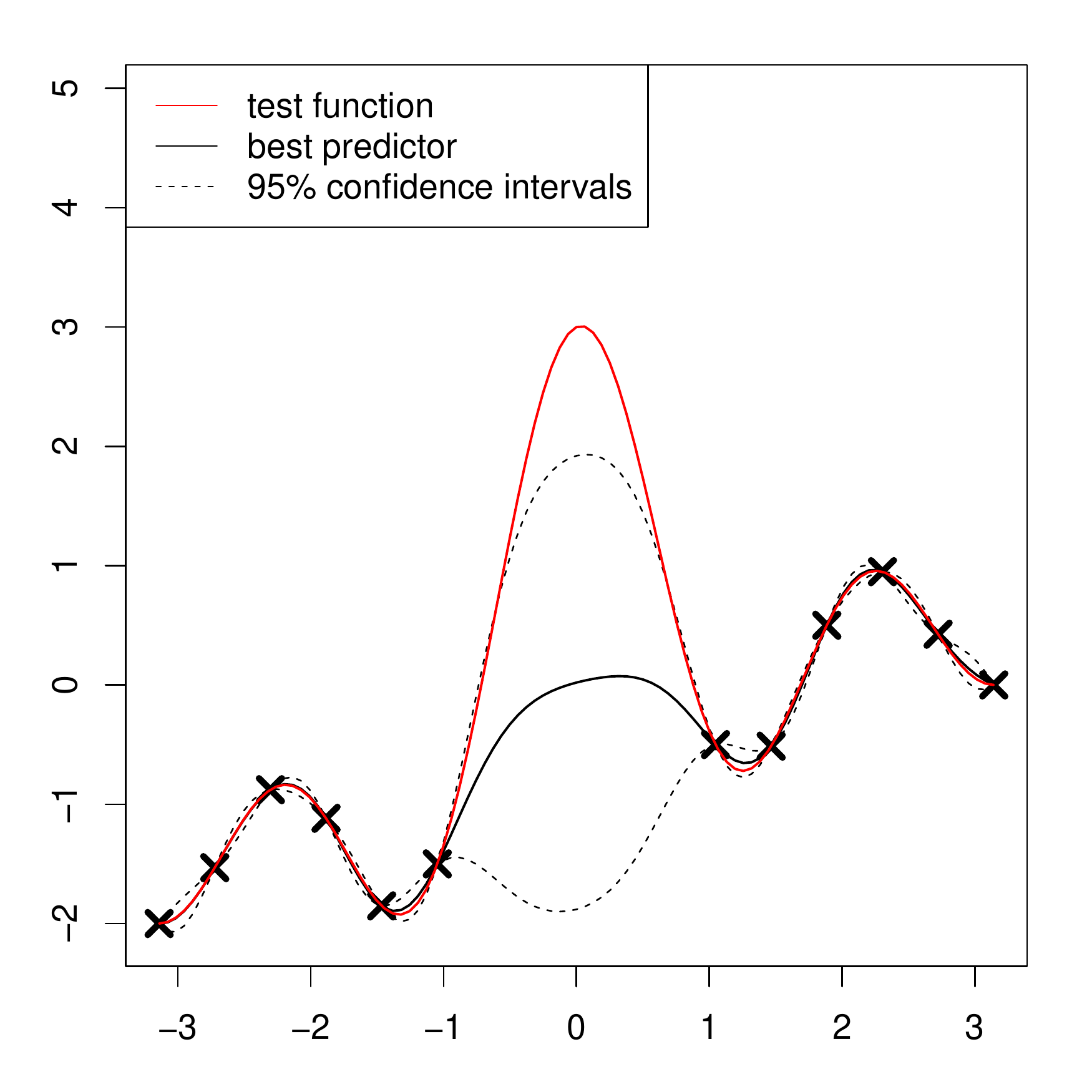}
    \caption{GPR with kernel $k$}
\end{subfigure} \qquad
\begin{subfigure}[b]{0.45\textwidth}
    \centering
    \includegraphics[width=\textwidth]{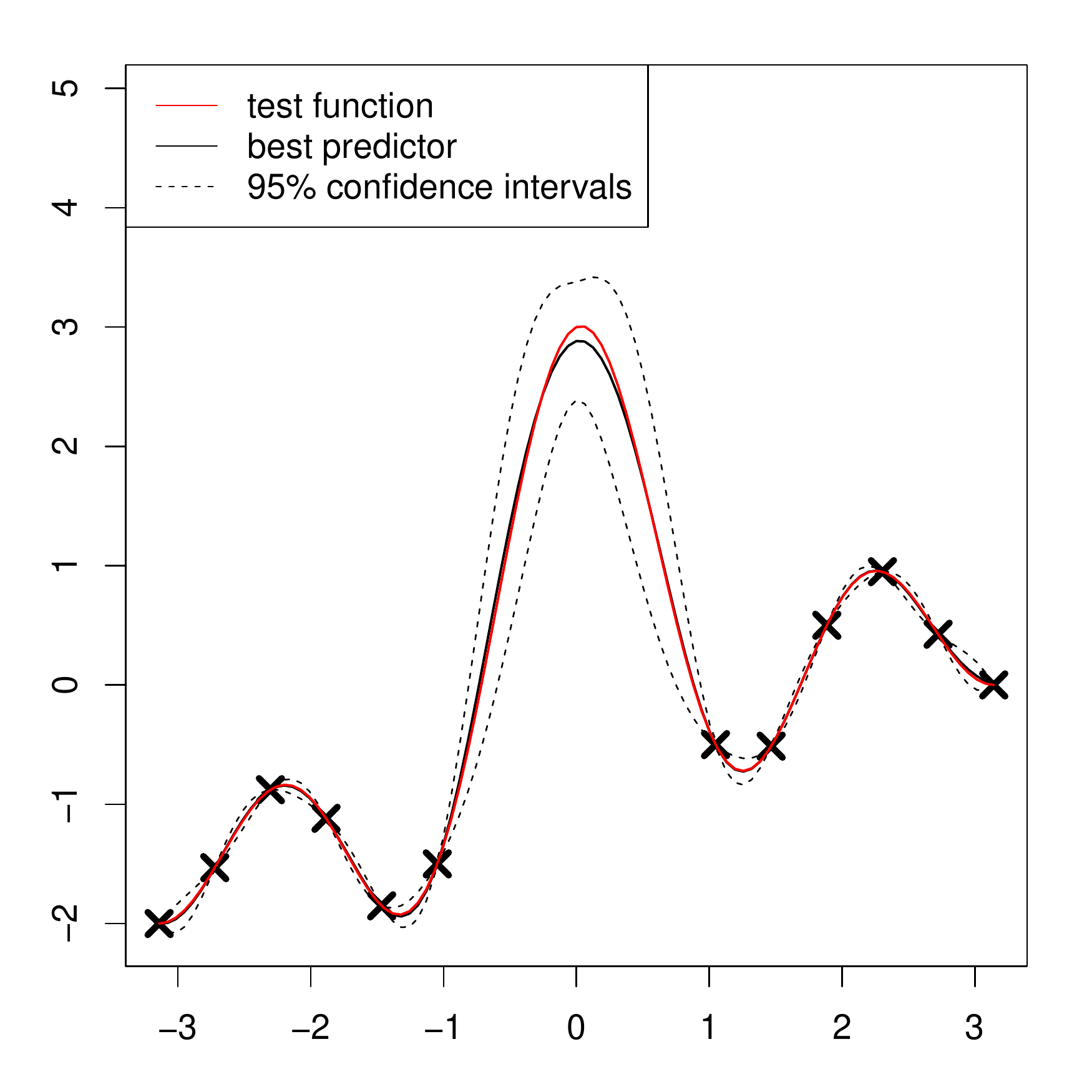}
    \caption{GPR with kernel $k_{inv}$}
\end{subfigure}%
\caption{Comparison of two GPR models. On the left panel, the model is based on an usual squared exponential kernel whereas on the right one it takes into account the zero-mean property of the function to approximate.}
\label{fig:zero-mean_GPR}
\end{figure}

\paragraph{GPR of a solution to a univariate linear ODE}
We saw in Section 3 that a Gaussian Process $Y$ with mean $\mu(x)=2x$ and kernel given by Eq.~\ref{eq:ODEkern} is equivalent to a process with paths satisfying the ODE $y''(x) + y(x) = 2x$. Figure~\ref{fig:ex_DEcond} shows the conditional distribution of $Y$ given evaluations at one or two points.
It can be seen on the right panel that the prediction uncertainty collapses as soon as $Y$ is evaluated at two distinct points. With this behaviour, the model reflects the unicity of the solution to such ODE under two equality conditions.

\begin{figure}
\centering
\begin{subfigure}[b]{0.45\textwidth}
    \centering
    \includegraphics[width=\textwidth]{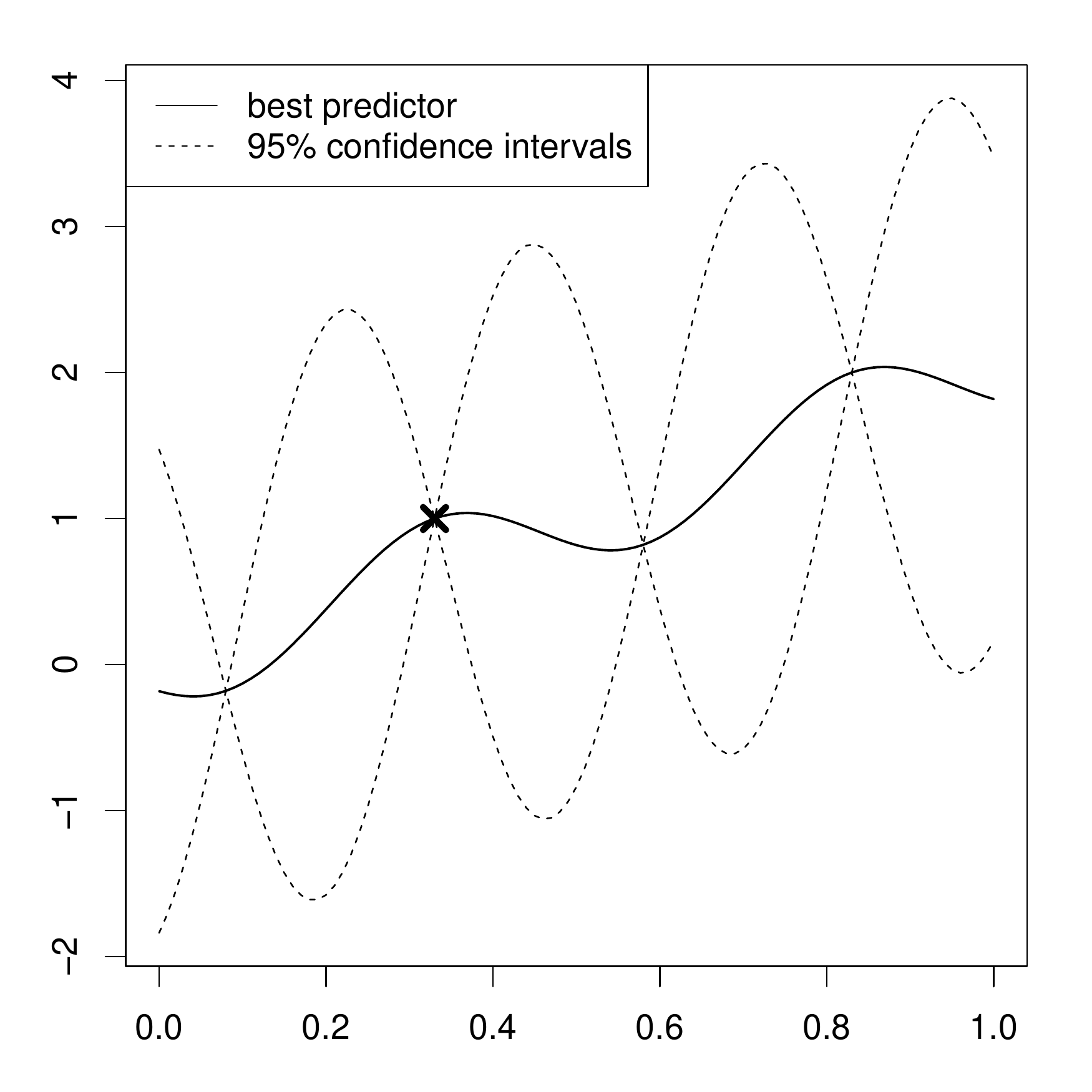}
    \caption{GPR with one data-point}
\end{subfigure} \qquad
\begin{subfigure}[b]{0.45\textwidth}
    \centering
    \includegraphics[width=\textwidth]{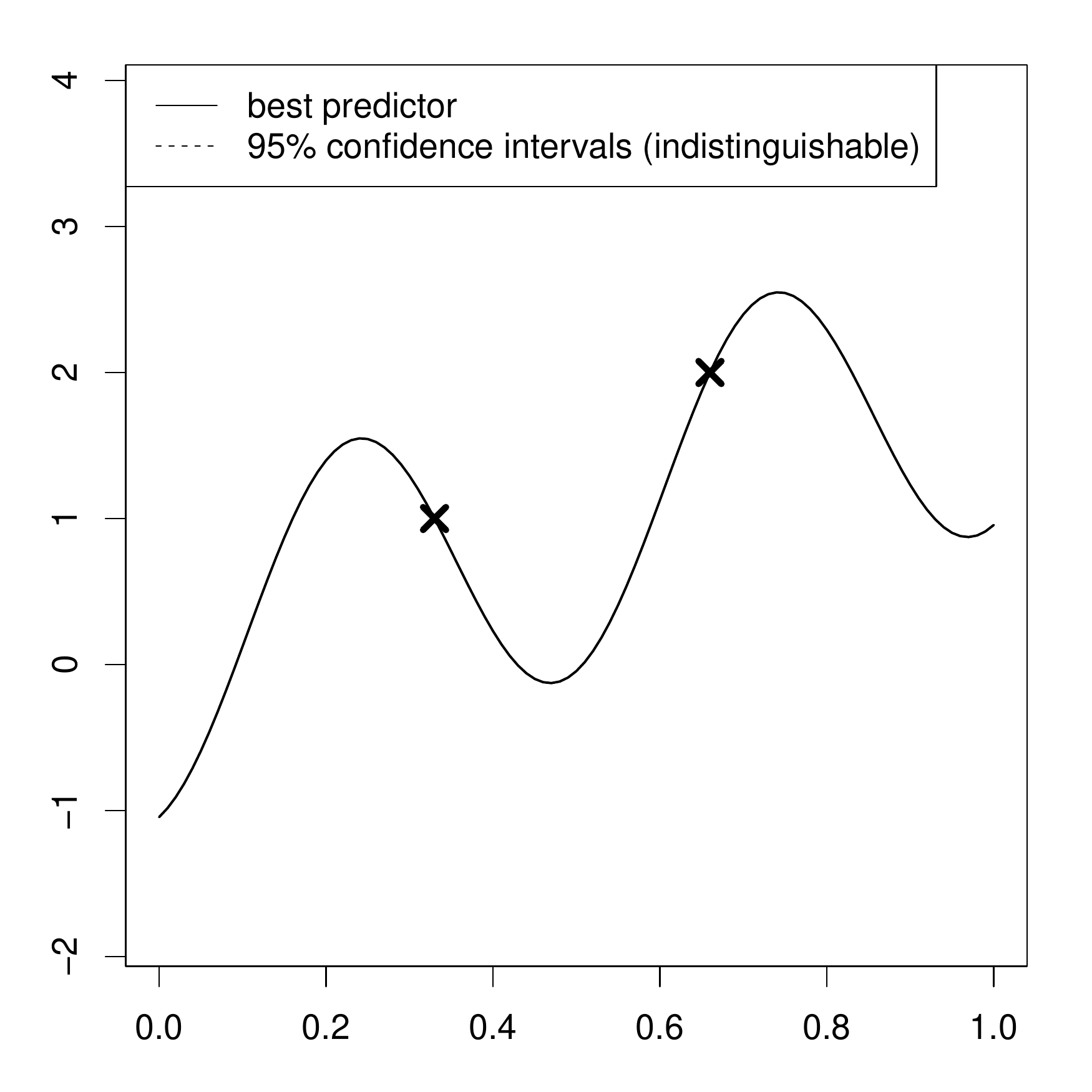}
    \caption{GPR with two data-points}
\end{subfigure}%
\caption{Examples of Gaussian process models based on a GP $Y$ satisfying $y''(x) + y(x) = 2x$. In this example, the matrix $\Sigma$ of Eq.~\ref{eq:ODEkern} is set to identity.}
\label{fig:ex_DEcond}
\end{figure}

\paragraph{GPR of an harmonic function}
We now consider the function $f(\mathbf{x}) = \cos(1-x_1) \exp (x_2)$, a solution to $\Delta y = 0$. 
As seen previously, the kernel given in Eq.~\ref{eq:kharm} satisfies this equation argumentwise so it allows to incorporate a structural prior of harmonicity in the GPR model. 
Figure~\ref{fig:ex_biHarm} shows the resulting predictions based on four observations, and the associated prediction error.
\begin{figure}
\centering
\begin{subfigure}[b]{0.45\textwidth}
    \centering
    \includegraphics[width=\textwidth]{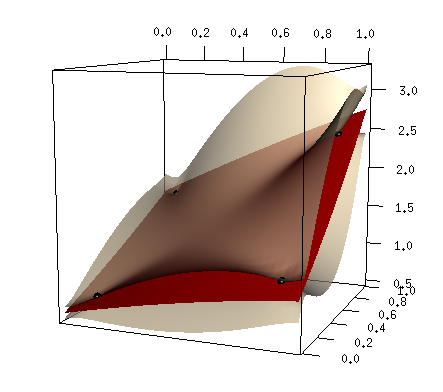}
    \caption{Mean predictor and 95\% confidence intervals}
\end{subfigure} \qquad
\begin{subfigure}[b]{0.45\textwidth}
    \centering
    \includegraphics[width=\textwidth]{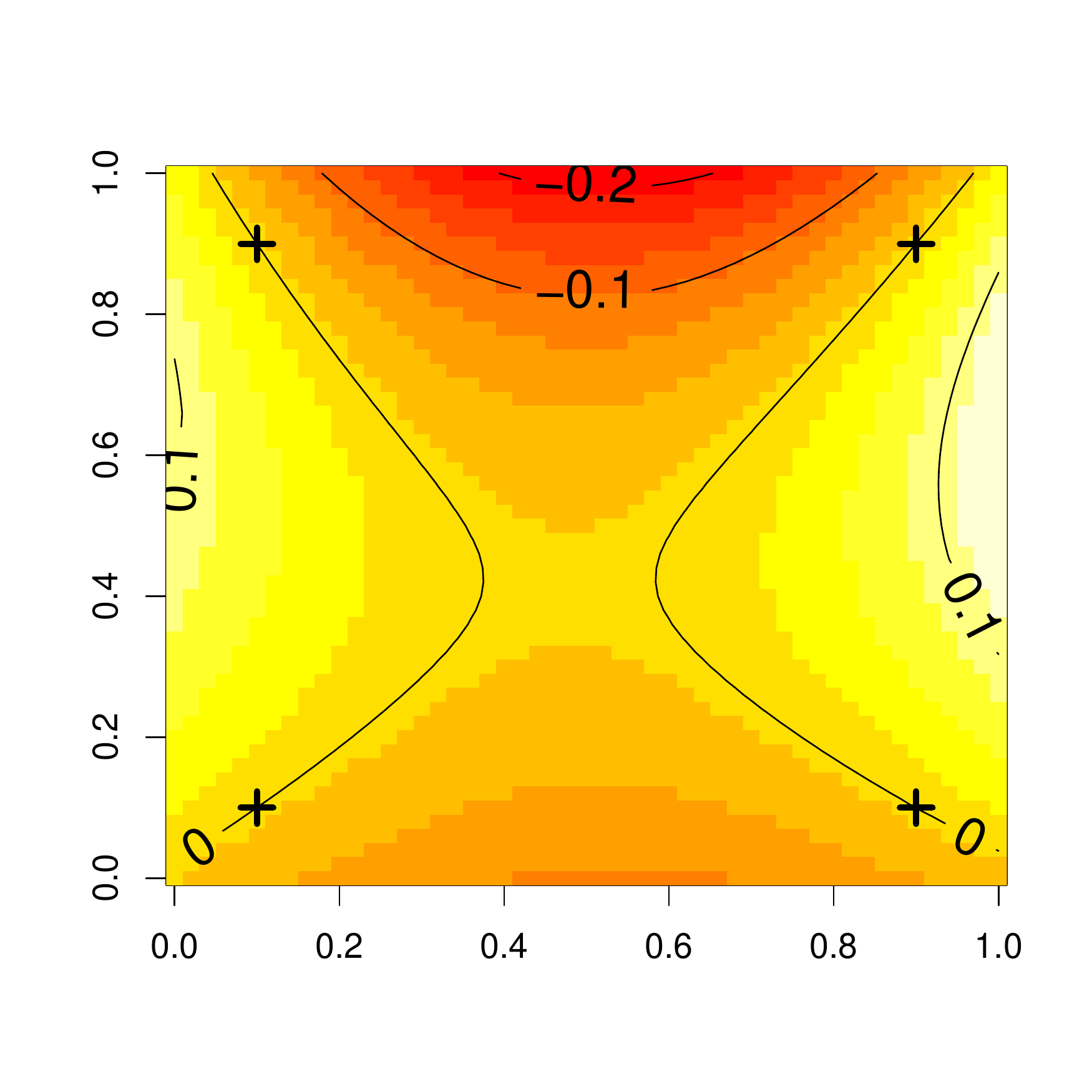}
    \caption{prediction error}
\end{subfigure}%
\caption{Example of GPR model based on an argumentwise harmonic kernel.}
\label{fig:ex_biHarm}
\end{figure}

\medskip 
Since the best predictor $m$ and $f$ are harmonic, so is also the prediction error $m - f$. It implies that the maximum error is located on the boundary of the domain (See Figure~\ref{fig:ex_biHarm}.b). This property may be of interest for the construction of design of experiments for learning harmonic functions. 

\paragraph{Sparse ANOVA kernels}
Given a product probability measure $\nu=\otimes_{1\leq i \leq d} \nu_{i}$ over $\mathbb{R}^d$, High Dimensional Model Representation 
(HDMR, See, e.g., \cite{Sobol_2003, kuo:slo:was:woz10}) corresponds to the decomposition of any $f\in L^2(\nu)$ as the sum of a constant, univariate effects, and interactions terms with increasing orders:
\begin{equation}
 	f(\mathbf{x}) = f_0 + \sum_{i=1}^d f_i(x_i) + \sum_{i<j} f_{i,j}(x_i,x_j) + \dots + f_{1,\dots,d}(\mathbf{x}) 
 	\ \ \ \ (\mathbf{x}\in D)
\label{eq:HDMR}
\end{equation}
where the $f_I$'s ($I \subset \{1,\dots,d \}$) satisfy 
$\int_{I} f(x_i) \mathrm{d}\nu_i(x_i) = 0 $ for all $i \in I$~\citep{kuo:slo:was:woz10}. This decomposition is of great interest for defining \textit{variance-based} global sensitivity indices (usually referred to as \textit{Sobol' indices}) 
quantifying the influence of each variable or group of variables on the response:
\begin{equation}
  	S_I = \frac{\mathrm{Var}[f_I(\mathbf{X}_I)]}{\mathrm{Var}[f(\mathbf{X})]}
\end{equation}
where $\mathbf{X}$ is a random vector with probability distribution $\nu$. 
Sparsity of $f$, in the sense of having many terms equal to zero in Eq.~\ref{eq:HDMR}, can be interpreted as invariance with respect to operators of the form $T(f)=f-P_I(f)$, where $P_I$ denotes the projection operator mapping $f$ to $f_I$. We will now illustrate on a popular function from the sensitivity analysis literature how taking prior knowledge of such sparsity into account may highly benefit  prediction. 

\medskip

\noindent
Sobol's $g$-function~\citep{sobol2003theorems} is defined on $[0,1]^d$ as
\begin{equation}
g(\mathbf{x}) = \prod_{i=1}^d \frac{|4x_i-2| + a_i}{1+a_i}
\end{equation}
where the $a_i$'s are arbitrary positive parameters. Beyond the convenient fact that the dimensionality $d$ is tunable, one particular feature of $g$ is that the global sensitivity indices have a closed form expression~\cite{sobol2003theorems}:
\begin{equation}
S_I = \frac{\prod_{i \in I} \beta_i}{\prod_{i=1}^d (1 + \beta_i) - 1} \text{\qquad with } \beta_i = \frac13 (1 + a_i)^{-2}.
\end{equation}
Prior knowledge of Sobol' indices allows defining a subset $\mathcal{S}$ of main effects and interactions with a significant influence on the output. 
We will consider hereafter that terms explaining less than 1e-3 \% of variance are non significant. 

\medskip

\noindent
We consider the $g$-function in ten dimensions ($d=10$) with parameter values $\mathbf{a} = (0,0,0,2,2,2,4,4,4,8)$. 
In such settings, $\mathcal{S}$ is a set of 23 subsets of indices including all the main effects except the one of $x_{10}$, and some first and second order interaction terms. 
Let us now compare prediction performances obtained with four GPR models respectively based on the following kernels:
\begin{equation}
 \begin{split}
  k_{add}(\mathbf{x},\mathbf{y}) & = \sigma_0^2 + \sum_{i = 1}^d k^0_i(x_i,y_i) \\
  k_{spa}(\mathbf{x},\mathbf{y}) & = \sigma_0^2 + \sum_{I \in \mathcal{S}}^{\phantom{d}} \prod_{i \in I} k^0_i(x_i,y_i)\\
  k_{anova}(\mathbf{x},\mathbf{y}) &= \sigma^2  \prod_{i=1}^d (k_i^0 (x_i,y_i) + 1) \\
  k_{gauss}(\mathbf{x},\mathbf{y}) & = \sigma^2 \prod_{i = 1}^d \exp \left(- \frac{(x_i- y_i)^2}{\theta_i^2} \right).
 \end{split}
\end{equation}
where the $k^0_i$ correspond to argumentwise centred Gaussian kernels as in Eq.~\ref{eq:k0}, parameterized by variances $\sigma^2_i$ and lengthscales $\theta_i$. 
The kernels $k_{add}$, $k_{spa}$, $k_{anova}$ and $k_{gauss}$ are respectively parametrized by $21$, $19$, $21$ and $11$ parameters.
Since the product in the expression of $k_{anova}$ can be expanded as a sum of $2^d$ kernels with increasing interaction orders, $k_{add}$ and $k_{spa}$ can be seen as sparse variations of $k_{anova}$ where most of the terms are set to zero. 

\medskip 

The learning set is made of $100$ uniformly distributed points over the input space. 
The parameters $\sigma^2_i$ and $\theta_i$ as well as the observation noise $\tau^2$ are estimated by maximum likelihood. 
Furthermore, a test set of 1000 uniformly distributed points is considered for assessing the model accuracies. 
The obtained results are summarized in Table~\ref{tab:last}. 
It can be seen that the model based on $k_{gauss}$ performs rather poorly on this example. This can be explained by the fact that $k_{gauss}$ does not include any bias term (i.e. a constant in the kernel expression). As a consequence, the associated model tends to come back to $0$ when the prediction point is far from the training points. 
On the other hand, models based on $k_{add}$, $k_{spa}$ and $k_{anova}$ perform significantly better since they explain at least half of the variance of $g$. 
The sum of sensitivity indices associated to the main effects shows that 66\% of the variance of $g$ is explained by its additive part so the additive structure assumed by $k_{add}$, though not completely unrealistic, is a strong assumption that disadvantages the model. 
Conversely, the structures of $k_{spa}$ and $k_{anova}$ are well-suited to the problem at hand and the associated models give the best results. This is particularly true for $k_{spa}$ which only includes the relevant terms for approximating $g$.

\begin{table}[ht!]
\begin{center}
\begin{tabular}{ccccc}
\hline
kernel & $k_{add}$ & $k_{spa}$ & $k_{anova}$ & $k_{gauss}$ \\ \hline
Log-likelihood & -32.36 & -12.65 & \textbf{-1.48} & -45.73 \\
RMSE & 0.98 & \textbf{0.74} & 0.86 & 1.17 \\ 
Q$^2$ & 0.49 & \textbf{0.71} & 0.62 & 0.28 \\ 
\hline
\end{tabular}
\caption{Comparison of the predictivity of models based on various kernels. }
\label{tab:last}
\end{center}
\end{table}

\medskip

In this example, the best model has been obtained by using a sparse kernel obtained from the knowledge of sensitivity indices. Since the latter are usually not available, the issue of automatic sparsity detection is of great importance in practice. Various methods based on a trade-off between a $L^2$-norm and a $L^1$-norm can be found in the literature (see for example~\cite{Bach2009,gunn1999supanova}).

\section{Concluding remarks and perspectives}

This article focuses on the control of pathwise invariances of square-integrable random field through their covariance structure. 
It is illustrated in various examples how a number features one may wish to impose on paths such as multivariate sparsity, symmetries, or being solution to homogeneous ODEs may be cast as invariance properties under bounded linear operators. 

\medskip
One of the main results of this work, given in Proposition~\ref{prop:kernelProcess}, relates sample path invariances to the \textit{argumentwise invariance} of the covariance kernel, in cases where $T$ is a combination of composition operators. 
Although conceptually simple, such class of operators suffices to describe various mathematical properties on functions such as invariances under finite group actions, or additivity (i.e., being sum of univariate functions). 
This result allows us in particular to extend recent results from \cite{afst_nico} by giving a complete characterization of kernels leading to centred random fields with additive paths, and also to retrieve another result from \cite{afst_david} on kernels leading to random fields with paths invariant under the action of a finite group.
Perhaps surprisingly, the obtained results linking sample paths properties to the covariance apply to squared-integrable random fields and do not restrict to the Gaussian case.

\medskip
Turning then to the particular case of Gaussian random fields, we obtain in Proposition~\ref{prop:invRKHS} a generalization of Proposition~\ref{prop:kernelProcess} to a broader class of operators,
that enables constructing Gaussian fields with paths invariant under various integral and differential operators. The core results essentially base on the Lo\`eve isometry between the Hilbert space generated by the field and its Reproducing Kernel Hilbert Space. Perspectives include revisiting those invariance results in measure-theoretic settings. 

\medskip
Taking invariances into account in random field modelling and prediction is of huge practical interest, as illustrated in Section \ref{sec:gpr}. 
Various examples involving different kinds of structural priors show how Gaussian process regression models may be drastically improved by designing an appropriate invariant kernel. 
One striking fact is that invariances assumptions may increase the accuracy of the model even if the function to approximate is not perfectly invariant. 
This can be seen on the last example where the assumed sparsity allows to improve the model by avoiding the curse of dimensionality.

\bigskip
\noindent
\textbf{Acknowledgements}: The authors would like to thank Fabrice Gamboa 
for his advice regarding the present article. 

\bigskip
\noindent
\bibliographystyle{elsarticle-num}

\end{document}